\newtheorem{theorem}{Theorem}
\newtheorem{remark}{Remark}%
\newtheorem{lemma}{Lemma}%
\newtheorem{definition}{Definition}%
\newtheorem{ left=1.5cm,right=1.5cm, top=2cm, bottom=2cm}{geometry}
\title{Precompactness in bivariate metric
	semigroup-valued bounded variation spaces}
\author[1]{Yinglian Niu}
\author[1,2,3]{Jingshi Xu\textsuperscript{*}}
\affil[1]{School of Mathematics and Computing Science,Guilin University of Electronic Technology,1 Jinjilu,Guilin,541004, Guangxi,China}
\affil[2]{Center for Applied Mathematics of Guangxi,Guilin University of Electronic Technology,1 Jinjilu,Guilin,541004, Guangxi, China}
\affil[3]{Guangxi Colleges and Universities Key Laboratory of Data Analysis and Computation, Guilin University of Electronic Technology, 1 Jinjilu,Guilin,541004,Guangxi,China}
\date{}
\begin{document}

\maketitle
\vspace{-1\baselineskip} 
\begin{center}
	\*Corresponding author(s). E-mail(s): jingshixu@126.com;
	
	\ Contributing authors: 1516035825@qq.com;
	
	\ These authors contributed equally to this work.
\end{center}

\section*{\centering{\small{Abstract}}}
\vspace{-0.5\baselineskip}
In this paper, we show that if a set in bivariate metric semigroups-valued bounded variation spaces is pointwise totally bounded  and joint equivariated then it is precompact. These spaces include bounded Jordan variation spaces, bounded Wiener variation spaces, bounded Waterman variation spaces, bounded Riesz variation spaces and bounded Korenblum variation spaces. To do so, we introduce the concept of equimetric set.\\
\textbf{Keywords:} bounded variation space; precompact set; metric semigroup; joint equivariated; bivariate; equimetric set

\maketitle

\section{Introduction}
The concept of bounded variation was first introduced by Jordan in his study of Fourier series in \cite{20}. Subsequently, the bounded variation space is extended to a wider range of directions, many bounded variation spaces and their applications are proposed; see \cite{1,2,3,bbc,5,7,8,9,10,11,12,14,13,15,18,24,26} and references therein.

It is well known that precompact sets are important in topological spaces. For a long time, there are a few results for the judgment of precompact sets in bounded variation spaces; see \cite{16, 19, 23}. Recently, in \cite{6} Bugajewski and Gulgowski introduced the concept of equivariation set and gave the criterion of precompactness for subsets in univariate bounded variation space in the sense of Jordan. The results in \cite{6} were generalized by Si and the second author of this paper to univariate Banach space-valued bounded-variation spaces in the sense of Jordan, bounded Wiener variation spaces, bounded Wiener-Young variation spaces, bounded Schramm variation spaces, bounded Waterman variation spaces, bounded Riesz variation spaces and bounded Korenblum variation spaces in \cite{25}, and variable exponent bounded variation spaces in \cite{sx}. They also were generalized to bounded Waterman $\Lambda$-variation spaces, Young $\Phi$-variation spaces and integral variation spaces by in Gulgowski\cite{30}, and bounded Schramm variation spaces by Gulgowski, Kasprzak and Ma\'{c}kowiak in \cite{31} via equinormed sets. Recently, the authors of the paper generalized the results in \cite{25} to univariate metric semigroup valued bounded-variation spaces in \cite{nx}. We remark here that the theory of metric semigroup valued bounded-variation spaces has been extended over the past decades, see \cite{15}.

Inspired by the above works, we consider the for sufficient conditions for precompact sets in bivariate metric semigroup-valued bounded variation spaces. The structure of this paper is as follows. In Sections 2, we obtain a sufficient condition for precompact sets in bivariate metric semigroup valued bounded Jordan variation spaces. In Sections 3 and 4, we consider the sufficient conditions for the precompact sets in bivariate metric semigroup valued bounded Wiener variation spaces in two cases respectively. From Section 5 to Section 7, we give the sufficient conditions for precompact sets in bivariate metric semigroup valued bounded Watman variation spaces, bounded Riesz variation spaces, bounded Korenblum variation spaces respectively.

In the remainder of this section, we recall some notions. In the sequel, by $\mathbb{N}$ we denote the set of all positive integers, $\mathbb{R}$ represents the set of real numbers, $I$ represents the unit interval $[0,1]$, then $I\times I=[0,1]\times[0,1]$. Take a pair of partitions $\Pi:0=t_{0}<t_{1}<\cdots<t_{n}=1$ and $\Pi^{\ast}:0=s_{0}<s_{1}<\cdots<s_{m}=1$ of the unit interval $I$. A metric semigroup is a triple $(M,d,+)$, where $(M,d)$ is a metric space with metric $d$, while $(M,+)$ is an additive commutative semigroup with addition operation, and $d$ is translation invariant: $d(u+w,v+w)=d(u,v)$ for all $u,v,w\in M$. A metric semigroup $(M,d,+)$ is complete if $(M,d)$ is a complete metric space. In this paper, we always assume the metric semigroup $(M,d,+)$ is complete. For arbitrary elements $u,v,\bar{u},\bar{v}\in M$ of a metric semigroup $(M,d,+)$, we have
\begin{equation}\label{1}
 d(u,v)\leq d(u+\bar{u},v+\bar{v})+d(\bar{u},\bar{v}).
\end{equation}
\begin{equation}\label{2}
 d(u+\bar{u},v+\bar{v})\leq d(u,v)+d(\bar{u},\bar{v}).
\end{equation}

A subset in a topological space is precompact if its closure is compact. We say $\rho$ is a pseudo-metric on $X$ if $\rho(f,g)=\rho(g,f)\geq 0$ and $\rho(f,g)\leq \rho(f,h)+\rho(h,g)$ for each $f,g,h\in X$. A subset $M$ is totally bounded in a pseudo-metric space $X$ if for each $\varepsilon>0$ there exist finite elements $\{x_1,x_2,x_3,...,x_n\}$ in $M$ such that $M\subset\bigcup^n_{i=1}B(x_i,\varepsilon)$, where $B(x_i,\varepsilon)$ is a ball in $X$ with center at $x_{i}$ and radius $\varepsilon$, then $\{B(x_i,\varepsilon),i=1,2,3,...,n\}$ is called the finite $\varepsilon$-net of $M$.
In a complete metric space, precompactness is equivalent to totally boundedness set.

Suppose $\{\rho_\tau: \tau\in T\}$ is a family of pseudo-metrics on $X$. Let $\rho=\sup_{\tau\in T}\rho_\tau$. That means for each $f,g\in X,$
\[\rho(f,g)=\sup_{\tau\in T}\rho_\tau(f,g).\]
Then $\rho$ is also a pseudo-metric.

Let $A\subset X$. If for any $\varepsilon>0,$ there exists a $\tau\in T$ such that
\[\rho(f,g)<\varepsilon+\rho_{\tau}(f,g), \ \text{for every} \ f,g\in A,\]
then $A$ is called $T$-equimetric.

We remark here that the concept of equimetric sets is inspired by the concept of equinormed sets in \cite{30}.

It is easy to obtain the following lemma.

\begin{lemma}\label{l1}
Suppose $\{\rho_\tau: \tau\in T\}$ is a family of pseudo-metrics on $X$ and $\rho=\sup_{\tau\in T}\rho_\tau$. Let $A\subset X$. If $A$ is $T$-equimetric and totally bounded for $\rho_\tau$ for each $\tau \in T$, then $A$ is totally bounded for $\rho$.
\end{lemma}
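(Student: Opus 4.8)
The plan is to show that $A$ is totally bounded for $\rho$ by constructing, for an arbitrary $\varepsilon > 0$, a finite $\varepsilon$-net of $A$ with respect to $\rho$. The idea is to exploit the $T$-equimetric hypothesis to reduce the problem from the supremum pseudo-metric $\rho$ to a single well-chosen member $\rho_\tau$ of the family, for which a finite net is guaranteed by the assumed total boundedness.

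First I would fix $\varepsilon > 0$ and apply the definition of $T$-equimetric with tolerance $\varepsilon/3$: this yields a single index $\tau \in T$ such that
\begin{equation}\label{eq:equi}
\rho(f,g) < \frac{\varepsilon}{3} + \rho_\tau(f,g) \quad \text{for all } f,g \in A.
\end{equation}
Next, since $A$ is totally bounded for this particular $\rho_\tau$, I would invoke that property with radius $\varepsilon/3$ to obtain finitely many points $x_1,\dots,x_n \in A$ such that every $f \in A$ satisfies $\rho_\tau(f,x_i) < \varepsilon/3$ for some $i$. The final step is to combine these two facts: for any $f \in A$, choosing the matching center $x_i$ gives
\[
\rho(f,x_i) < \frac{\varepsilon}{3} + \rho_\tau(f,x_i) < \frac{\varepsilon}{3} + \frac{\varepsilon}{3} = \frac{2\varepsilon}{3} < \varepsilon,
\]
so the same finite collection $\{x_1,\dots,x_n\}$ serves as a finite $\varepsilon$-net for $A$ under $\rho$.

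Because the construction is a direct chaining of the two hypotheses, I do not anticipate a genuine obstacle; the main point requiring care is the bookkeeping of the error budget. One must split $\varepsilon$ so that the equimetric slack and the $\rho_\tau$-net radius add up to strictly less than $\varepsilon$ — the choice $\varepsilon/3$ and $\varepsilon/3$ above leaves comfortable room, though $\varepsilon/2$ and $\varepsilon/2$ (or any valid split) would work equally well. It is also worth noting that the centers $x_i$ are drawn from $A$ itself, as required by the definition of total boundedness given in the excerpt, and that the argument uses only the triangle inequality for $\rho$ implicitly through the chaining in \eqref{eq:equi}; no completeness of $X$ or additional structure on the pseudo-metrics is needed.
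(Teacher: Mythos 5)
Your proof is correct: the paper states Lemma~\ref{l1} without proof (``It is easy to obtain the following lemma''), and your chaining of the $T$-equimetric bound with a finite $\rho_\tau$-net is exactly the intended elementary argument. The error-budget split is fine (indeed $\varepsilon/2+\varepsilon/2$ would also do), and you correctly note that the centers lie in $A$ and that no completeness is needed.
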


By use of Lemma 1, we will obtain simpler proofs than those in \cite{nx} for univariate setting in the following sections.

\section{Bivariate bounded Wiener $p$ variation spaces as $p\in(0,\infty)$}

Norbert Wiener introduced functions of bounded $p$-variation and used them to study Fourier series in \cite{27}. In this section, we consider them for bivariate metric semigroup valued functions. For a function $f:I\times I\rightarrow M$, and a pair of partitions $\Pi$ and $\Pi^{\ast}$ of the unit interval $I$, if $p\in [1,\infty )$ we denote
$$V_{p}(f(\cdot,0),\Pi):= \bigg(\sum_{i=1}^{n}d^{p}(f(t_{i},0),f(t_{i-1},0))\bigg)^{\frac{1}{p}},$$
$$V_{p}(f(0,\cdot),\Pi^{\ast}):=\bigg(\sum_{j=1}^{m}d^{p}(f(0,s_{j}),f(0,s_{j-1}))\bigg)^{\frac{1}{p}},$$
$$V_{p,2}(f,\Pi\times\Pi^{\ast}):=\bigg(\sum_{j=1}^{m}\sum_{i=1}^{n}d^{p}(f(t_{i},s_{j})+f(t_{i-1},s_{j-1}),f(t_{i},s_{j-1})+f(t_{i-1},s_{j}))
\bigg)^{\frac{1}{p}};$$
if $p\in(0,1)$, we denote
$$V_{p}(f(\cdot,0),\Pi):= \sum_{i=1}^{n}d^{p}(f(t_{i},0),f(t_{i-1},0)),$$
$$V_{p}(f(0,\cdot),\Pi^{\ast}):=\sum_{j=1}^{m}d^{p}(f(0,s_{j}),f(0,s_{j-1})),$$
and
$$V_{p,2}(f,\Pi\times\Pi^{\ast}):=\sum_{j=1}^{m}\sum_{i=1}^{n}d^{p}(f(t_{i},s_{j})+f(t_{i-1},s_{j-1}),f(t_{i},s_{j-1})+f(t_{i-1},s_{j}).$$
Moreover, $V_{p}(f(\cdot,0)):= \sup V_{p}(f(\cdot,0),\Pi)$,~$V_{p}(f(0,\cdot)):= \sup V_{p}(f(0,\cdot),\Pi^{\ast})$,~$V_{p,2}(f):= \sup V_{p,2}(f,\Pi\times\Pi^{\ast})$, where all suprema are taken over the indicated partitions.

\begin{definition}\label{d1}
 Let $p\in(0,\infty )$. For a bounded function $f:I\times I\to M$, the quantity
$$V_{p}(f):= V_{p}(f(\cdot,0))+V_{p}(f(0,\cdot))+V_{p,2}(f)$$
is called the total p-variation of $f$ on $I\times I$. If $V(f)<+\infty$, we say that $f$ has bounded p-variation on $I\times I$ and denote ${\rm WBV}_{p}(I\times I,M)=\{f:I\times I\rightarrow M,V_{p}(f)<\infty\}$.
\end{definition}

\begin{remark}\label{r1}
 $1$-variation is also called Jordan variation.
\end{remark}

Taking a pair of partitions $\Pi$ and $\Pi^{\ast}$ of the unit interval $I$, for every $f,g\in{\rm WBV}_{p}(I \times I,M)$, if $p\in [1,\infty )$, we denote
$$V_{p}(f(\cdot,0),g(\cdot,0),\Pi):= \bigg(\sum_{i=1}^{n}d^{p}(f(t_{i},0)+g(t_{i-1},0),f(t_{i-1},0)+g(t_{i},0))\bigg)^{\frac{1}{p}},$$
$$V_{p}(f(0,\cdot),g(0,\cdot),\Pi^{\ast}):=\bigg(\sum_{j=1}^{m}d^{p}(f(0,s_{j})+g(0,s_{j-1}),f(0,s_{j-1})+g(0,s_{j}))\bigg)^{\frac{1}{p}},$$ and
\begin{align*}
V_{p,2}(f,g,\Pi\times\Pi^{\ast}):=\bigg(\sum_{j=1}^{m}\sum_{i=1}^{n}&d^{p}(f(t_{i},s_{j})+f(t_{i-1},s_{j-1})+g(t_{i},s_{j-1})+g(t_{i-1},s_{j}),\\
&g(t_{i},s_{j})+g(t_{i-1},s_{j-1})+f(t_{i},s_{j-1})+f(t_{i-1},s_{j}))\bigg)^{\frac{1}{p}};
\end{align*}
if $p\in(0,1)$, we denote
$$V_{p}(f(\cdot,0),g(\cdot,0),\Pi):= \sum_{i=1}^{n}d^{p}(f(t_{i},0)+g(t_{i-1},0),f(t_{i-1},0)+g(t_{i},0)),$$
$$V_{p}(f(0,\cdot),g(0,\cdot),\Pi^{\ast}):=\sum_{j=1}^{m}d^{p}(f(0,s_{j})+g(0,s_{j-1}),f(0,s_{j-1})+g(0,s_{j})),$$ and
\begin{align*}
V_{p,2}(f,g,\Pi\times\Pi^{\ast}):=\sum_{j=1}^{m}\sum_{i=1}^{n}d^{p}(&f(t_{i},s_{j})+f(t_{i-1},s_{j-1})+g(t_{i},s_{j-1})+g(t_{i-1},s_{j}),\\
&g(t_{i},s_{j})+g(t_{i-1},s_{j-1})+f(t_{i},s_{j-1})+f(t_{i-1},s_{j})).
\end{align*}

\begin{definition}\label{d2}
 Let $p\in(0,\infty )$. For every $f,g\in{\rm WBV}_{p}(I \times I,M)$ and each pair of partitions $\Pi$ and $\Pi^{\ast}$, we denote
\begin{align*}
V^{p}_{\Pi\times\Pi^{\ast}}(f,g)=V_{p}(f(\cdot,0),g(\cdot,0),\Pi)+V_{p}(f(0,\cdot),g(0,\cdot),\Pi^{\ast})+V_{p,2}(f,g,\Pi\times\Pi^{\ast}).
\end{align*}
Then the joint variation of two functions $f$, $g$ is defined by $$V_{p}(f,g):= \sup_{\Pi\times\Pi^\ast} V^{p}_{\Pi\times\Pi^\ast}(f,g)$$
 where the supremum is taken over the indicated partitions.  For every $f,g\in{\rm WBV}_{p}(I \times I,M)$, we denote
\begin{align*}\rho_{p}(f,g):= \sup_{\Pi\times\Pi^\ast}\rho^{p}_{\Pi\times\Pi^\ast}(f,g)&=:\sup_{\Pi\times\Pi^\ast} (d(f(0,0),g(0,0))+V^{p}_{\Pi\times\Pi^\ast}(f,g))\\
&= d(f(0,0),g(0,0))+V_{p}(f,g) .\end{align*}
\end{definition}

\begin{lemma}\label{l2}
Let $p\in(0,\infty )$. If $f,g\in{\rm WBV}_{p}(I\times I,M)$, then $V_{p}(f,g)\leq V_{p}(f)+V_{p}(g)$ .
\end{lemma}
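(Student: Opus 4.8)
The plan is to prove the inequality partition-by-partition: for every fixed pair of partitions $\Pi$ and $\Pi^{\ast}$ I would show that $V^{p}_{\Pi\times\Pi^{\ast}}(f,g)\leq V_{p}(f)+V_{p}(g)$, and then pass to the supremum over all partitions. Since $V^{p}_{\Pi\times\Pi^{\ast}}(f,g)$ splits into three pieces (the two one-dimensional boundary variations and the mixed second-order variation), it suffices to bound each piece separately by the corresponding pieces of $V_{p}(f)$ and $V_{p}(g)$.

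First I would handle a boundary term, say $V_{p}(f(\cdot,0),g(\cdot,0),\Pi)$. For each index $i$, applying inequality \eqref{2} with the grouping $u=f(t_{i},0)$, $\bar{u}=g(t_{i-1},0)$, $v=f(t_{i-1},0)$, $\bar{v}=g(t_{i},0)$, together with the symmetry of $d$, yields
$$d(f(t_{i},0)+g(t_{i-1},0),f(t_{i-1},0)+g(t_{i},0))\leq d(f(t_{i},0),f(t_{i-1},0))+d(g(t_{i},0),g(t_{i-1},0)).$$
Then, for $p\in[1,\infty)$, Minkowski's inequality in $\ell^{p}$ gives $V_{p}(f(\cdot,0),g(\cdot,0),\Pi)\leq V_{p}(f(\cdot,0),\Pi)+V_{p}(g(\cdot,0),\Pi)$; for $p\in(0,1)$ the same bound follows from the elementary subadditivity $(a+b)^{p}\leq a^{p}+b^{p}$ applied termwise. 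In either case this is at most $V_{p}(f(\cdot,0))+V_{p}(g(\cdot,0))$. The term $V_{p}(f(0,\cdot),g(0,\cdot),\Pi^{\ast})$ is treated identically.

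The core of the argument is the mixed term $V_{p,2}(f,g,\Pi\times\Pi^{\ast})$. For each $(i,j)$, I would regroup the four $f$-values and four $g$-values so that \eqref{2} applies: set $u=f(t_{i},s_{j})+f(t_{i-1},s_{j-1})$, $v=f(t_{i},s_{j-1})+f(t_{i-1},s_{j})$, $\bar{u}=g(t_{i},s_{j-1})+g(t_{i-1},s_{j})$, and $\bar{v}=g(t_{i},s_{j})+g(t_{i-1},s_{j-1})$. Using commutativity of $+$, the two arguments of the joint distance are exactly $u+\bar{u}$ and $v+\bar{v}$, so \eqref{2} bounds the joint term by $d(u,v)+d(\bar{u},\bar{v})$; after invoking the symmetry of $d$, these are precisely the $(i,j)$-th terms of $V_{p,2}(f)$ and $V_{p,2}(g)$. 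Combining termwise via Minkowski (resp. subadditivity of $t\mapsto t^{p}$) exactly as above gives $V_{p,2}(f,g,\Pi\times\Pi^{\ast})\leq V_{p,2}(f)+V_{p,2}(g)$.

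Adding the three bounds gives $V^{p}_{\Pi\times\Pi^{\ast}}(f,g)\leq V_{p}(f)+V_{p}(g)$ for every pair of partitions, and taking the supremum yields the claim. The main obstacle I anticipate is purely bookkeeping in the mixed term: choosing the grouping $u,\bar{u},v,\bar{v}$ so that \eqref{2} is applicable and its right-hand side matches the intended $f$- and $g$-mixed differences. This step relies essentially on the commutativity of the semigroup operation and the translation invariance and symmetry of $d$; once the grouping is fixed, everything reduces to the standard $\ell^{p}$ triangle inequality or its $p\in(0,1)$ counterpart.
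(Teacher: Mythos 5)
Your proposal is correct and follows essentially the same route as the paper: bound each of the three pieces of $V^{p}_{\Pi\times\Pi^{\ast}}(f,g)$ termwise via inequality \eqref{2} (with the commutative regrouping you describe for the mixed term), combine with Minkowski's inequality for $p\in[1,\infty)$ or subadditivity of $t\mapsto t^{p}$ for $p\in(0,1)$, and pass to the supremum over partitions. The only cosmetic difference is that you take the supremum of the full partition-wise sum at the end rather than of each of the three pieces separately, which if anything is slightly cleaner.
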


 \begin{proof} By Definition \ref{d2}, $$V_{p}(f,g)=V_{p}(f(\cdot,0),g(\cdot,0))+V_{p}(f(0,\cdot),g(0,\cdot))+V_{p,2}(f,g).$$
We claim that $V_{p,2}(f,g)\leq V_{p,2}(f)+V_{p,2}(g)$. Below, we only consider the case $p\in [1,\infty)$. The argument for $p\in (0,1)$ is similar. In fact, by Minkowski's inequality
\begin{align*}
V_{p,2}(f,g,\Pi\times\Pi^{\ast})&=\bigg(\sum_{j=1}^{m}\sum_{i=1}^{n}d^{p}(f(t_{i},s_{j})+f(t_{i-1},s_{j-1})+g(t_{i},s_{j-1})+g(t_{i-1},s_{j}),\\
&~~~~~~~~~~~~~g(t_{i},s_{j})+g(t_{i-1},s_{j-1})+f(t_{i},s_{j-1})+f(t_{i-1},s_{j}))\bigg)^{\frac{1}{p}}\\
&\leq\bigg(\sum_{j=1}^{m}\sum_{i=1}^{n}d^{p}(f(t_{i},s_{j})+f(t_{i-1},s_{j-1}),f(t_{i},s_{j-1})+f(t_{i-1},s_{j}))\bigg)^{\frac{1}{p}}\\
&~~+\bigg(\sum_{j=1}^{m}\sum_{i=1}^{n}d^{p}(g(t_{i},s_{j})+g(t_{i-1},s_{j-1}),g(t_{i},s_{j-1})+g(t_{i-1},s_{j}))\bigg)^{\frac{1}{p}}\\
&\leq V_{p,2}(f)+V_{p,2}(g).
\end{align*}
Taking the upper bound of $\Pi$ and $\Pi^{\ast}$ to the left of the upper inequality, we have
\begin{align*}
V_{p,2}(f,g)\leq V_{p,2}(f)+V_{p,2}(g).
\end{align*}
Similarly, $V_{p}(f(\cdot,0),g(\cdot,0))\leq V_{p}(f(\cdot,0))+V_{p}(g(\cdot,0))$ and $V_{p}(f(0,\cdot),g(0,\cdot))\leq V_{p}(f(0,\cdot))+V_{p}((0,\cdot))$. Therefore, $V_{p}(f,g)\leq V_{p}(f)+V_{p}(g)$.
\end{proof}

\begin{lemma}\label{l3}
Let $p\in(0,\infty )$. If $f,g\in{\rm WBV}_{p}(I\times I,M)$, then $V_{p}(f)\leq V_{p}(f,g)+V_{p}(g)$ and $|V_{p}(f)-V_{p}(g)|\leq V_{p}(f,g)$.
\end{lemma}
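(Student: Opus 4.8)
The plan is to prove the first inequality $V_{p}(f)\leq V_{p}(f,g)+V_{p}(g)$ componentwise, matching the three pieces in the definition of $V_p$, and then to deduce the second inequality by a symmetry argument. The engine throughout is the translation-invariance inequality \eqref{1}, combined with Minkowski's inequality when $p\in[1,\infty)$ and with the elementary subadditivity $(x+y)^{p}\le x^{p}+y^{p}$ when $p\in(0,1)$.

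First I would establish the estimate at the level of a single summand. For the mixed term, fix $\Pi,\Pi^{\ast}$ and a cell $(i,j)$, and set $u=f(t_{i},s_{j})+f(t_{i-1},s_{j-1})$, $v=f(t_{i},s_{j-1})+f(t_{i-1},s_{j})$, $\bar{u}=g(t_{i},s_{j-1})+g(t_{i-1},s_{j})$, $\bar{v}=g(t_{i},s_{j})+g(t_{i-1},s_{j-1})$. Applying \eqref{1} to these elements gives
\[
d(u,v)\leq d(u+\bar{u},v+\bar{v})+d(\bar{u},\bar{v}),
\]
and using commutativity of $+$ and symmetry of $d$ one checks that $d(u+\bar{u},v+\bar{v})$ is exactly the summand of $V_{p,2}(f,g,\Pi\times\Pi^{\ast})$ while $d(\bar{u},\bar{v})=d(\bar{v},\bar{u})$ is the summand of $V_{p,2}(g,\Pi\times\Pi^{\ast})$. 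The boundary terms are handled by the analogous choices $u=f(t_{i},0),\,v=f(t_{i-1},0),\,\bar{u}=g(t_{i-1},0),\,\bar{v}=g(t_{i},0)$ for the $(\cdot,0)$ slice, and its mirror image for the $(0,\cdot)$ slice.

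Next I would pass to the variation functionals. For $p\in[1,\infty)$ I would raise the pointwise estimate to the $p$-th power, sum over $(i,j)$, and apply Minkowski's inequality to split the right-hand side into two $p$-norms, obtaining $V_{p,2}(f,\Pi\times\Pi^{\ast})\leq V_{p,2}(f,g,\Pi\times\Pi^{\ast})+V_{p,2}(g,\Pi\times\Pi^{\ast})$; for $p\in(0,1)$ the subadditivity of $t\mapsto t^{p}$ gives the same inequality directly on the (root-free) sums. Taking the supremum over all pairs of partitions on the left and bounding the right by the respective suprema yields $V_{p,2}(f)\leq V_{p,2}(f,g)+V_{p,2}(g)$, and the same two-case argument gives the corresponding bounds for $V_{p}(f(\cdot,0))$ and $V_{p}(f(0,\cdot))$. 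Adding the three component inequalities produces $V_{p}(f)\leq V_{p}(f,g)+V_{p}(g)$. For the second claim, I would note that since $d$ is symmetric and $+$ commutative, the defining expression for $V_{p}(f,g)$ is invariant under interchanging $f$ and $g$, so $V_{p}(g,f)=V_{p}(f,g)$; applying the first inequality once as stated and once with $f,g$ swapped gives $V_{p}(f)-V_{p}(g)\leq V_{p}(f,g)$ and $V_{p}(g)-V_{p}(f)\leq V_{p}(f,g)$, hence $|V_{p}(f)-V_{p}(g)|\leq V_{p}(f,g)$. The only delicate point is the bookkeeping in the application of \eqref{1}: the four-fold sums defining the mixed terms must be paired with $\bar{u},\bar{v}$ so that translation-invariance collapses the $g$-entries into precisely the summands of $V_{p,2}(f,g)$ and $V_{p,2}(g)$; once that matching is fixed, the rest is routine.
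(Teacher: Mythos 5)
Your proof follows the same route as the paper's: apply the translation-invariance inequality \eqref{1} summand by summand with exactly the pairing of $u,v,\bar u,\bar v$ you describe, pass to the variation functionals via Minkowski for $p\in[1,\infty)$ (the paper leaves the subadditive case $p\in(0,1)$ as "similar," which your $(x+y)^{p}\le x^{p}+y^{p}$ argument fills in), take suprema over partitions, add the three component bounds, and obtain the absolute-value estimate by swapping $f$ and $g$. The argument is correct and matches the paper's proof essentially step for step.
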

\begin{proof}
For every partitions of $\Pi$ and $\Pi^{\ast}$. By (\ref{1}),
\begin{align*}
d(f(t_{i},0),f(t_{i-1},0))\leq d(f(t_{i},0)+g(t_{i-1},0),f(t_{i-1},0)+g(t_{i},0))+d(g(t_{i},0),g(t_{i-1},0)),
\end{align*}
\begin{align*}
d(f(0,s_{j}),f(0,s_{j-1}))\leq d(f(0,s_{j})+g(0,s_{j-1}),f(0,s_{j-1})+g(0,s_{j}))+d(g(0,s_{j}),g(0,s_{j-1})),
\end{align*}
and
\begin{align*}
&d(f(t_{i},s_{j})+f(t_{i-1},s_{j-1}),f(t_{i},s_{j-1})+f(t_{i-1},s_{j}))\\
&\leq d(f(t_{i},s_{j})+f(t_{i-1},s_{j-1})+g(t_{i},s_{j-1})+g(t_{i-1},s_{j}),g(t_{i},s_{j})+g(t_{i-1},s_{j-1})\\
&~~~~+f(t_{i},s_{j-1})+f(t_{i-1},s_{j}))+d(g(t_{i},s_{j})+g(t_{i-1},s_{j-1})+g(t_{i},s_{j-1})+g(t_{i-1},s_{j})).
\end{align*}
Below, we only consider the case $p\in [1,\infty)$. The argument for $p\in (0,1)$ is similar.  By Minkowski's inequality, we have
\begin{align*}
V_{p}(f(\cdot,0),\Pi)&=\bigg(\sum_{i=1}^{n}d^{p}(f(t_{i},0),f(t_{i-1},0))\bigg)^{\frac{1}{p}}\\
&\leq V_{p}(f(\cdot,0),g(\cdot,0),\Pi)+V_{p}(g(\cdot,0),\Pi),
\end{align*}
\begin{align*}
V_{p}(f(0,\cdot),\Pi^{\ast})&=\bigg(\sum_{j=1}^{m}d^{p}(f(0,s_{j}),f(0,s_{j-1})\bigg)^{\frac{1}{p}}\\
&\leq V_{p}(f(0,\cdot),g(0,\cdot),\Pi^{\ast})+V_{p}(g(0,\cdot),\Pi^{\ast}),
\end{align*}
and
\begin{align*}
V_{p,2}(f,\Pi\times\Pi^{\ast})&=\bigg(\sum_{j=1}^{m}\sum_{i=1}^{n}d^{p}(f(t_{i},s_{j})+f(t_{i-1},s_{j-1}),f(t_{i},s_{j-1})+f(t_{i-1},s_{j}))\bigg)^{\frac{1}{p}}\\
&\leq V_{p,2}(f,g,\Pi\times\Pi^{\ast})+V_{p,2}(g,\Pi\times\Pi^{\ast}).
\end{align*}
Take the upper bound of $\Pi$ and $\Pi^{\ast}$ to the left of the upper inequality, we have
\begin{align*}
V_{p}(f(\cdot,0))\leq V_{p}(f(\cdot,0),g(\cdot,0))+V_{p}(g(\cdot,0)),
\end{align*}
\begin{align*}
V_{p}(f(0,\cdot))\leq V_{p}(f(0,\cdot),g(0,\cdot))+V_{p}(g(0,\cdot)),
\end{align*}
and
\begin{align*}
V_{p,2}(f)\leq V_{p,2}(f,g)+V_{p,2}(g).
\end{align*}
Therefore,
\begin{align*}
V_{p}(f)\leq V_{p}(f,g)+V_{p}(g).
\end{align*}
Similarly, we have
\begin{align*}
V_{p}(g)\leq V_{p}(f,g)+V_{p}(f).
\end{align*}
Therefore, $|V_{p}(f)-V_{p}(g)|\leq V_{p}(f,g)$.
\end{proof}

It is easy to see that $\rho^{p}_{\Pi\times\Pi^\ast}$ is a pseudo-metric on ${\rm WBV}(I\times I,M)$ for each pair of partitions $\Pi$ and $\Pi^{\ast}$.

\begin{theorem}\label{t1}
  Let $p\in(0,\infty )$. Then $({\rm WBV}_{p}(I\times I,M),\rho_{p})$ is a complete metric space.
\end{theorem}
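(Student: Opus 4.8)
The plan is to check that $\rho_p$ satisfies the metric axioms and then to prove completeness by the standard three-step scheme: extract a pointwise limit, show it lies in the space, and show convergence in $\rho_p$. Throughout, every pointwise estimate must be squeezed out of $\rho_p$ using only the two structural inequalities (\ref{1}) and (\ref{2}) together with translation invariance, since $M$ carries no subtraction.

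First I would argue that $\rho_p$ is a metric. As $\rho_p=\sup_{\Pi\times\Pi^\ast}\rho^p_{\Pi\times\Pi^\ast}$ is a supremum of pseudo-metrics, it is automatically a pseudo-metric, so only positive definiteness needs proof. Suppose $\rho_p(f,g)=0$; then $d(f(0,0),g(0,0))=0$ and $V_p(f,g)=0$. Testing the boundary part $V_p(f(\cdot,0),g(\cdot,0))$ on the partition $0<t<1$ forces $f(t,0)+g(0,0)=f(0,0)+g(t,0)$, and combining this with $f(0,0)=g(0,0)$ and translation invariance yields $f(t,0)=g(t,0)$; symmetrically $f(0,s)=g(0,s)$. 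For an interior point I would apply $V_{p,2}(f,g)$ to the single cell $[0,t]\times[0,s]$, obtaining the mixed-difference identity, substitute the boundary equalities just proved, and cancel the common summand by translation invariance to conclude $f(t,s)=g(t,s)$. Hence $f=g$.

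For completeness, let $\{f_k\}$ be $\rho_p$-Cauchy. The heart of the argument is pointwise Cauchyness. Fixing $(t,s)$, I would apply (\ref{1}) with $\bar u=f_k(0,0)+f_l(t,0)+f_l(0,s)$ and $\bar v=f_l(0,0)+f_k(t,0)+f_k(0,s)$, so that $d(f_k(t,s),f_l(t,s))$ is split into the single-cell mixed difference of $V_{p,2}(f_k,f_l)$ over $[0,t]\times[0,s]$, which is controlled by $\rho_p(f_k,f_l)$ (by a power of it when $p\in(0,1)$), plus $d(\bar u,\bar v)$. By (\ref{2}) the latter is dominated by the corner and boundary distances $d(f_k(0,0),f_l(0,0))$, $d(f_k(t,0),f_l(t,0))$ and $d(f_k(0,s),f_l(0,s))$, and each boundary distance is itself controlled by $\rho_p(f_k,f_l)$ exactly as in the one-dimensional case, i.e. again via (\ref{1}), (\ref{2}) and a single boundary increment. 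Thus $\{f_k(t,s)\}$ is Cauchy in the complete space $(M,d)$ and converges to some $f(t,s)$.

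It then remains to show $f\in{\rm WBV}_p(I\times I,M)$ and $\rho_p(f_k,f)\to 0$. Addition is jointly continuous by (\ref{2}), so every finite variation sum is continuous under pointwise limits. By Lemma \ref{l3}, $|V_p(f_k)-V_p(f_l)|\le\rho_p(f_k,f_l)$, so $\{V_p(f_k)\}$ is bounded by some $C$; passing to the limit in each finite partition sum gives $V_p(f)\le C<\infty$, whence $f\in{\rm WBV}_p(I\times I,M)$. For convergence, fix $\varepsilon>0$ and $N$ with $\rho_p(f_k,f_l)<\varepsilon$ for $k,l\ge N$; for each fixed pair of partitions, $\rho^p_{\Pi\times\Pi^\ast}(f_k,f_l)\le\rho_p(f_k,f_l)<\varepsilon$, and letting $l\to\infty$ (continuity of the finite sum under pointwise convergence) gives $\rho^p_{\Pi\times\Pi^\ast}(f_k,f)\le\varepsilon$; taking the supremum over all partitions yields $\rho_p(f_k,f)\le\varepsilon$ for $k\ge N$. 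I expect the main obstacle to be the pointwise Cauchy step, because the mixed second difference couples the interior value with boundary and corner values, so the boundary estimates must be established first and then peeled off by translation invariance; tracking the $1/p$ powers in the regime $p\in(0,1)$ adds bookkeeping but no new idea.
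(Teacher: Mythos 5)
Your proposal is correct and follows essentially the same route as the paper: positive definiteness is tested on the special partitions $\{0,t,1\}$ and $\{0,s,1\}$, pointwise Cauchyness is extracted from single-cell terms of $V_{p}(f_k,f_l)$ via (\ref{1}) and (\ref{2}), boundedness of $V_{p}(f_k)$ comes from Lemma \ref{l3}, and both membership and convergence are obtained by passing to the limit in finite partition sums before taking suprema. The only difference is cosmetic: you peel off the corner and boundary distances somewhat more explicitly than the paper's (rather terse) chain of inequalities does.
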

\begin{proof} For $p=1$, the result has proved Lemma 3 in \cite{11}. Hence, we only give the proof for $p\in(0,1)$ or $p\in(1,\infty )$. The proof for $p\in(0,1)$ is similar to the case $p\in(1,\infty )$. Thus, we only consuder $p\in(1,\infty )$. First, we prove $\rho_{p}$ is a metric. Let every $f,g,h\in {\rm WBV}_p(I\times I,M)$. Obviously, $\rho_{p}(f,g)\geq 0$. If $\rho_{p}(f,g)=0$, then $d(f(0,0),g(0,0))+V_{p}(f,g)=0$. Therefore $f(0,0)=g(0,0)$. For every $t,s>0$, takeing special partitions $0=t_{0}<t<t_{1}=1$ and $0=s_{0}<s<s_{1}=1$, we have
 $$(d^{p}(f(t,0)+g(0,0),f(0,0)+g(t,0))+d^{p}(f(1,0)+g(t,0),f(t,0)+g(1,0)))^{\frac{1}{p}}\leq V_{p}(f,g)=0,$$
 $$(d^{p}(f(0,s)+g(0,0),f(0,0)+g(0,s))+d^{p}(f(0,1)+g(0,s),f(0,s)+g(0,1)))^{\frac{1}{p}}\leq V_{p}(f,g)=0,$$
 \begin{align*}
 &(d^{p}(f(t,s)+f(0,0)+g(t,0)+g(0,s),f(t,0)+f(0,s)+g(t,s)+g(0,0))\\
 &+d^{p}(f(1,1)+f(t,s)+g(t,1)+g(1,s),f(t,1)+f(1,s)+g(t,s)+g(1,1)))^{\frac{1}{p}}\leq V_{p}(f,g)=0,
 \end{align*}
 which yields $f(t,0)=g(t,0)$,~$f(0,s)=g(0,s)$,~$f(t,s)=g(t,s)$. Thus for every $t,s\in I$, we obtain $f(t,s)=g(t,s)$. Again,
 \begin{align*}
  &V_{p}(f(\cdot,0),g(\cdot,0),\Pi)\\
  &=\bigg(\sum_{i=1}^{n}d^{p}(f(t_{i},0)+g(t_{i-1},0)+h(t_{i},0)+h(t_{i-1},0),f(t_{i-1},0)+g(t_{i},0)+h(t_{i},0)+h(t_{i-1},0))\bigg)^{\frac{1}{p}}\\
  &\leq\bigg(\sum_{i=1}^{n}(d(f(t_{i},0)+h(t_{i-1},0),f(t_{i-1},0)+h(t_{i},0))+d(g(t_{i-1},0)+h(t_{i},0),g(t_{i},0)+h(t_{i-1},0)))^{p}\bigg)^{\frac{1}{p}}\\
  &\leq\bigg(\sum_{i=1}^{n}d^{p}(f(t_{i},0)+h(t_{i-1},0),f(t_{i-1},0)+h(t_{i},0))\bigg)^{\frac{1}{p}}\\
  &~~~~+\bigg(\sum_{i=1}^{n}d^{p}(g(t_{i-1},0)+h(t_{i},0),g(t_{i},0)+h(t_{i-1},0))\bigg)^{\frac{1}{p}}\\
  &=V_{p}(f(\cdot,0),h(\cdot,0),\Pi)+V_{p}(h(\cdot,0),g(\cdot,0),\Pi).
 \end{align*}
 Similarly, $V_{p}(f(0,\cdot),g(0,\cdot),\Pi^{\ast})\leq V_{p}(f(0,\cdot),h(0,\cdot),\Pi^{\ast})+V_{p}(h(0,\cdot),g(0,\cdot),\Pi^{\ast})$ and $V_{p,2}(f,g,\Pi\times\Pi^{\ast})\leq V_{p,2}(f,h,\Pi\times\Pi^{\ast})+V_{p,2}(h,g,\Pi\times\Pi^{\ast})$, moreover, $d(f(0,0),g(0,0))\leq d(f(0,0),h(0,0))+ d(h(0,0),g(0,0))$, we have $\rho_{p}(f,g)\leq\rho_{p}(f,h)+\rho_{p}(h,g)$. Therefore, $\rho_{p}$ is a metric.

Now, we prove ${\rm WBV}_{p}(I\times I,M)$ is complete. Let $\{f_{u}\}_u$ is a \rm Cauchy sequence, then for every $\varepsilon>0$ there exists a integer $N\in\mathbb{N}$ such that for all $u,v\in \mathbb{N}$ with $u,v>N$, $\rho_{p}(f_u,f_v)\leq\varepsilon$. Thus $d(f_u(0,0),f_v(0,0))\leq\varepsilon$ and $V_{p}(f_u,f_v)\leq\varepsilon$ for $u,v>N$. Since $M$ is complete, so that the sequence $\{f_{u}(0,0)\}_u$ converges to some element of $M$ which is denoted by $f(0,0)$. Let $v\to\infty$ and $\lim\limits_{v\to\infty}f_v(0,0)=f(0,0)$. For $u>N$, we get $d(f_u(0,0),f(0,0))\leq\varepsilon$. Fixing an arbitrary point $t,s\in(0,1]$, taking special partitions $\{0,t,1\}$ and $\{0,s,1\}$, we have
 \begin{align*}
  &|d(f_u(t,s),f_v(t,s))-d(f_u(0,0),f_v(0,0))|\\
  &\leq d(f_u(t,s)+f_u(0,0),f_v(0,0)+f_v(t,s))\\
  &\leq(d^{p}(f_u(t,s)+f_u(0,0)+f_v(t,0)+f_v(0,s),f_v(0,0)+f_v(t,s)+f_u(t,0)+f_u(0,s)))^{\frac{1}{p}}\\
  &~~+d^{p}(f_v(t,0)+f_v(0,s),f_u(t,0)+f_u(0,s)))^{\frac{1}{p}}\\
  &\leq(d^{p}(f_u(t,s)+f_u(0,0)+f_v(t,0)+f_v(0,s),f_v(0,0)+f_v(t,s)+f_u(t,0)+f_u(0,s)))^{\frac{1}{p}}\\
  &~~+(d^{p}(f_v(t,0)+f_u(0,0),f_u(t,0)+f_v(0,0)))^{\frac{1}{p}}+d^{p}(f_v(0,s)+f_u(0,0),f_u(0,0)+f_v(0,0)))^{\frac{1}{p}}\\
  &\leq V_{p}(f_u,f_v)\leq\varepsilon,
 \end{align*}
then $d(f_u(t,s),f_v(t,s))\leq 2\varepsilon$ for every $u,v>N$. This means the sequence $\{f_{u}(t,s)\}_u$ is a uniformly \rm Cauchy sequence, it has a pointwise limit which is denoted by $f$.

By Lemma \ref{l3}, $|V_{p}(f_{u})-V_{p}(f_{v})|\leq V_{p}(f_{u},f_{v})$. Therefore the sequence $\{V_{p}(f_u)\}$ is bounded, i.e. there exists a constant $E>0$ such that $V_{p}(f_u)\leq E$ for $u\in\mathbb{N}$. For any finite partitions of $\Pi$ and $\Pi^{\ast}$,
 \begin{align*}
 &\bigg(\sum_{i=1}^{n}d^{p}(f_u(t_{i},0),f_{u}(t_{i-1},0))\bigg)^{\frac{1}{p}}+\bigg(\sum_{j=1}^{m}d^{p}(f_{u}(0,s_{j}),f_{u}(0,s_{j-1}))\bigg)^{\frac{1}{p}}\\
 &+\bigg(\sum_{j=1}^{m}\sum_{i=1}^{n}d^{p}(f_{u}(t_{i},s_{j})+f_{u}(t_{i-1},s_{j-1}),f_{u}(t_{i},s_{j-1})+f_{u}(t_{i-1},s_{j}))\bigg)^{\frac{1}{p}}\leq E,
 \end{align*}
 letting $u\to\infty$, we obtain
  \begin{align*}
 &\bigg(\sum_{i=1}^{n}d^{p}(f(t_{i},0),f(t_{i-1},0))\bigg)^{\frac{1}{p}}+\bigg(\sum_{j=1}^{m}d^{p}(f(0,s_{j}),f(0,s_{j-1}))\bigg)^{\frac{1}{p}}\\
 &+\bigg(\sum_{j=1}^{m}\sum_{i=1}^{n}d^{p}(f(t_{i},s_{j})+f(t_{i-1},s_{j-1}),f(t_{i},s_{j-1})+f(t_{i-1},s_{j}))\bigg)^{\frac{1}{p}}\leq E,
 \end{align*}
 which implies that $f\in {\rm WBV}_{p}(I\times I,M)$. Moreover, $V_{p}(f_u,f_v)\leq\varepsilon$ for $u,v>N$. Therefore, for $u,v>N$
  \begin{align*}
 &\bigg(\sum_{i=1}^{n}d^{p}(f_u(t_{i},0)+f_{v}(t_{i-1},0),f(t_{i-1},0)+f_{v}(t_{i},0))\bigg)^{\frac{1}{p}}\\
 &+\bigg(\sum_{j=1}^{m}d^{p}(f_{u}(0,s_{j})+f_{v}(0,s_{j-1}),f_{u}(0,s_{j-1})+f_{v}(0,s_{j}))\bigg)^{\frac{1}{p}}\\
 &+\bigg(\sum_{j=1}^{m}\sum_{i=1}^{n}d^{p}(f_{u}(t_{i},s_{j})+f_{u}(t_{i-1},s_{j-1})+f_{v}(t_{i},s_{j-1})+f_{v}(t_{i-1},s_{j}),\\
&~~~~~~f_{v}(t_{i},s_{j})+f_{v}(t_{i-1},s_{j-1})+f_{u}(t_{i},s_{j-1})+f_{u}(t_{i-1},s_{j}))\bigg)^{\frac{1}{p}}\leq \varepsilon,
 \end{align*}
 letting $v\to\infty$, we have
  \begin{align*}
 &\bigg(\sum_{i=1}^{n}d^{p}(f_u(t_{i},0)+f(t_{i-1},0),f_{u}(t_{i-1},0)+f(t_{i},0))\bigg)^{\frac{1}{p}}\\
 &+\bigg(\sum_{j=1}^{m}d^{p}(f_{u}(0,s_{j})+f(0,s_{j-1}),f_{u}(0,s_{j-1})+f(0,s_{j}))\bigg)^{\frac{1}{p}}\\
 &+\bigg(\sum_{j=1}^{m}\sum_{i=1}^{n}d^{p}(f_{u}(t_{i},s_{j})+f_{u}(t_{i-1},s_{j-1})+f(t_{i},s_{j-1})+f(t_{i-1},s_{j}),\\
&~~~~~~f(t_{i},s_{j})+f(t_{i-1},s_{j-1})+f_{u}(t_{i},s_{j-1})+f_{u}(t_{i-1},s_{j}))\bigg)^{\frac{1}{p}}\leq \varepsilon.
 \end{align*}
Hence, we have $V_{p}(f_u,f)\leq\varepsilon$ for $u>N$. Therefore, $\rho_{p}(f_u,f)\leq 2\varepsilon$. Thus $\{f_{u}\}$ converges to $f$ in ${\rm WBV}_{p}(I\times I,M)$. So $({\rm WBV}_{p}(I\times I,M),\rho_{p})$ is complete.
\end{proof}

\begin{definition}\label{d3}
Let $p\in(0,\infty)$. A set $A\subset{\rm WBV}_{p}(I\times I,M)$ is said to be joint equivariated, if for every $\varepsilon>0$, there exist partitions $\Pi_{\varepsilon}$ and $\Pi_{\varepsilon}^{\ast}$ such that for every $f,g\in A$, we have
$$V_{p}(f,g)\leq\varepsilon+V_{p}(f,g,\Pi_{\varepsilon}\times\Pi_{\varepsilon}^{\ast}),$$ where $$V_{p}(f,g,\Pi_{\varepsilon}\times\Pi_{\varepsilon}^{\ast}):= V_{p}(f(\cdot,0),g(\cdot,0),\Pi_{\varepsilon})+V_{p}(f(0,\cdot),g(0,\cdot),\Pi_{\varepsilon}^{\ast})+V_{p,2}(f,g,\Pi_{\varepsilon}\times\Pi_{\varepsilon}^{\ast}).$$
\end{definition}

\begin{lemma}\label{l4}
Let $m,n\in \mathbb{N}$. For elements $\xi=(\xi_{ij})_{i=0,j=0}^{n,m}$ and $\delta=(\delta_{ij})_{i=0,j=0}^{n,m}\in M^{(n+1)(m+1)}$. If $p\in(0,1)$, let
\begin{align*}
&\rho'_{p}(\xi,\delta)=\sup_{\Pi\times\Pi^\ast}\bigg( d(\xi_{00},\delta_{00})+\sum_{i=1}^{n}d^{p}(\xi_{i\,0}+\delta_{i-1\,0},\xi_{i-1\,0}+\delta_{i\,0})\\
&~~+\sum_{j=1}^{m}d^{p}(\xi_{0\,j}+\delta_{0\,j-1},\xi_{0\,j-1}+\delta_{0\,j})\\
&~~+\sum_{j=1}^{m}\sum_{i=1}^{n}d^{p}(\xi_{i\,j}+\xi_{i-1\,j-1}+\delta_{i-1\,j}+\delta_{i\,j-1},\xi_{i-1\,j}+\xi_{i\,j-1}+\delta_{i\,j}+\delta_{i-1\,j-1})\bigg).
\end{align*}
If $p\in(1,\infty)$, let
\begin{align*}
\rho'_{p}(\xi,&\delta)= \sup_{\Pi\times\Pi^\ast}\bigg( d(\xi_{00},\delta_{00})+\bigg(\sum_{i=1}^{n}d^{p}(\xi_{i\,0}+\delta_{i-1\,0},\xi_{i-1\,0}+\delta_{i\,0})\bigg)^{\frac{1}{p}}\\
&~~+\bigg(\sum_{j=1}^{m}d^{p}(\xi_{0\,j}+\delta_{0\,j-1},\xi_{0\,j-1}+\delta_{0\,j})\bigg)^{\frac{1}{p}}\\
&~~+\bigg(\sum_{j=1}^{m}\sum_{i=1}^{n}d^{p}(\xi_{i\,j}+\xi_{i-1\,j-1}+\delta_{i-1\,j}+\delta_{i\,j-1},\xi_{i-1\,j}+\xi_{i\,j-1}+\delta_{i\,j}+\delta_{i-1\,j-1})\bigg)^{\frac{1}{p}}\bigg).
\end{align*}
Then $(M^{(n+1)(m+1)},\rho'_{p})$ is complete. Moreover, let $B_{ij}$ be precompact in $M$ for $i=0,\dots,n,j=0,\dots,m$. Then $B=\prod_{i=0,j=0}^{n,m}B_{ij}$ be precompact in $(M^{(n+1)(m+1)},\rho'_{p})$.
\end{lemma}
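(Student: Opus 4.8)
The plan is to show that $\rho'_{p}$ is uniformly equivalent to the coordinatewise (finite product) uniformity on $M^{(n+1)(m+1)}$ generated by $d$, and then to transport completeness and precompactness from that product structure, where both properties are elementary consequences of the corresponding properties of $(M,d)$. Concretely, I would produce two moduli $\Phi,\Psi$ with $\Phi(t),\Psi(t)\to0$ as $t\to0$ such that $\rho'_{p}(\xi,\delta)\le\Phi\big(\max_{i,j}d(\xi_{ij},\delta_{ij})\big)$ and $\max_{i,j}d(\xi_{ij},\delta_{ij})\le\Psi\big(\rho'_{p}(\xi,\delta)\big)$.

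For the upper estimate I would apply inequality (\ref{2}) together with translation invariance to each summand of $\rho'_{p}$. For the one-dimensional terms, taking $u=\xi_{i0}$, $\bar u=\delta_{i-1\,0}$, $v=\delta_{i0}$, $\bar v=\xi_{i-1\,0}$ gives $d(\xi_{i0}+\delta_{i-1\,0},\xi_{i-1\,0}+\delta_{i0})\le d(\xi_{i0},\delta_{i0})+d(\xi_{i-1\,0},\delta_{i-1\,0})$; applying (\ref{2}) twice to the mixed second-order increment bounds it by the sum of the four corner distances $d(\xi_{kl},\delta_{kl})$ with $(k,l)\in\{(i,j),(i-1,j),(i,j-1),(i-1,j-1)\}$. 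Since every partition in the defining supremum involves only finitely many such summands, this yields the bound by $\Phi$, where $\Phi$ is linear in the maximum when $p\ge1$ and, when $p\in(0,1)$, is controlled by first powers together with $p$-th powers using $(a+b)^{p}\le a^{p}+b^{p}$.

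For the reverse estimate I would recover each coordinate distance from the full-grid partition, which is one of the partitions in the supremum, so its value does not exceed $\rho'_{p}(\xi,\delta)$. Using inequality (\ref{1}) with $u=\xi_{ij}$, $v=\delta_{ij}$ and $\bar u,\bar v$ the remaining summands of the associated joint increment, I would bound $d(\xi_{ij},\delta_{ij})$ by that joint increment plus a remainder $d(\bar u,\bar v)$ involving only strictly smaller indices; the remainder is in turn controlled by lower-index coordinate distances via (\ref{2}) as above. A double induction on $(i,j)$, anchored at the term $d(\xi_{00},\delta_{00})$ which appears directly in $\rho'_{p}$, then produces $\Psi$, which behaves like $t$ for $p\ge1$ and like $t^{1/p}+t$ for $p\in(0,1)$. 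Together these two estimates show that the identity map between $(M^{(n+1)(m+1)},\rho'_{p})$ and the coordinatewise metric space is uniformly continuous in both directions.

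With the equivalence established, both assertions follow in the standard way. That $\rho'_{p}$ is genuinely a metric (triangle inequality and separation of points) follows either from the equivalence or by repeating the Minkowski computation of Theorem \ref{t1}. For completeness, a $\rho'_{p}$-Cauchy sequence is coordinatewise $d$-Cauchy by the lower estimate, each coordinate converges by completeness of $(M,d)$, and the tuple of limits is the $\rho'_{p}$-limit by the upper estimate. For precompactness, each precompact $B_{ij}$ is totally bounded in $(M,d)$, so the finite product $B=\prod_{i,j}B_{ij}$ is totally bounded coordinatewise; the upper estimate converts a coordinatewise net into a $\rho'_{p}$-net, so $B$ is totally bounded, and since $(M^{(n+1)(m+1)},\rho'_{p})$ is complete, $B$ is precompact. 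The main obstacle is the lower estimate at the interior indices: isolating a single coordinate difference from the four-fold mixed second-order increments requires the careful pairing in (\ref{1}) and the double induction, and the $p$-th-power bookkeeping must be carried out so that both moduli $\Phi$ and $\Psi$ vanish at $0$.
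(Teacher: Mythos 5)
Your proposal is correct, and the core estimates are the same ones the paper relies on: inequality (\ref{2}) to dominate each joint increment by sums of coordinate distances, and inequality (\ref{1}) plus induction on the indices to recover each $d(\xi_{ij},\delta_{ij})$ from $\rho'_{p}$. The difference is organizational. The paper never isolates the two moduli $\Phi$ and $\Psi$; instead it argues each claim by hand — the triangle inequality via a Minkowski computation, completeness by showing a $\rho'_{p}$-Cauchy sequence is coordinatewise Cauchy ``by induction'' and then passing to the limit in the defining sums, and precompactness by extracting nested coordinatewise convergent subsequences and asserting that the resulting diagonal subsequence converges in $\rho'_{p}$. Your route, packaging the estimates as a two-sided uniform equivalence between $\rho'_{p}$ and the coordinatewise product uniformity, buys you a cleaner derivation: completeness and total boundedness transfer formally, and it makes explicit two steps the paper leaves implicit, namely why coordinatewise convergence implies $\rho'_{p}$-convergence (your upper estimate, needed at the end of both the completeness and the precompactness arguments) and why $\rho'_{p}$-smallness forces every coordinate distance to be small (your lower estimate with the $t^{1/p}$ correction for $p\in(0,1)$). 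One small caveat: the separation property of the metric ($\rho'_{p}(\xi,\delta)=0\Rightarrow\xi=\delta$) and the triangle inequality do not follow from uniform equivalence alone, so you are right to flag that the Minkowski computation still has to be carried out separately, as the paper does.
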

\begin{proof}
The proof for $p\in(0,1)$ are similar to the case $p\in(1,\infty )$. Hence, we only give the proof for $p\in(1,\infty )$. Thus, we only consuder $p\in(1,\infty )$. It is obvious that $\rho'_{p}(\xi,\delta)\geq 0.$ If $\rho'_{p}(\xi,\delta)=0$, then $d(\xi_{00},\delta_{00})=0$, thus $\xi_{00}=\delta_{00}$. Let $i=1$. Then $d(\xi_{10}+\delta_{00},\xi_{00}+\delta_{10})= 0$, thus $d(\xi_{10},\delta_{10})=0$, i.e. $\xi_{10}=\delta_{10}$. Let $j=1$. Then it is similarly to get $\xi_{01}=\delta_{01}$. By induction we obtain $\xi_{i~j}=\delta_{i~j}$ for $i=0,\dots,n, j=0,\dots,m$. That means $\xi=\delta$. Let $h=(h_{00},\dots,h_{nm})\in M^{(n+1)(m+1)}$. By (1) and Minkowski's inequality, we have
\begin{align*}
&d(\xi_{00},\delta_{00})+\bigg(\sum_{i=1}^{n}d^{p}(\xi_{i\,0}+\delta_{i-1\,0},\xi_{i-1\,0}+\delta_{i\,0})\bigg)^{\frac{1}{p}}+\bigg(\sum_{j=1}^{m}d^{p}(\xi_{0\,j}+\delta_{0\,j-1},\xi_{0\,j-1}+\delta_{0\,j})\bigg)^{\frac{1}{p}}\\
&+\bigg(\sum_{j=1}^{m}\sum_{i=1}^{n}d^{p}(\xi_{i\,j}+\xi_{i-1\,j-1}+\delta_{i-1\,j}+\delta_{i\,j-1},\xi_{i-1\,j}+\xi_{i\,j-1}+\delta_{i\,j}+\delta_{i-1\,j-1})\bigg)^{\frac{1}{p}}\\
&\leq d(\xi_{00}+h_{00},\delta_{00}+h_{00})+\bigg(\sum_{i=1}^{n}d^{p}(\xi_{i\,0}+\delta_{i-1\,0}+h_{i-1\,0}+h_{i\,0},\xi_{i-1\,0}+\delta_{i\,0}+h_{i-1\,0}+h_{i\,0})\bigg)^{\frac{1}{p}}\\
&+\bigg( \sum_{j=1}^{m}d^{p}(\xi_{0\,j}+\delta_{0\,j-1}+h_{0\,j-1}+h_{0\,j},\xi_{0\,j-1}+\delta_{0\,j}+h_{0\,j-1}+h_{0\,j})\bigg)^{\frac{1}{p}}\\
&+\bigg(\sum_{j=1}^{m}\sum_{i=1}^{n}d^{p}(\xi_{i\,j}+\xi_{i-1\,j-1}+\delta_{i-1\,j}+\delta_{i\,j-1}+h_{i\,j}+h_{i-1\,j-1}+h_{i-1\,j}+h_{i\,j-1},\\
&~~~~~~~~~\xi_{i-1\,j}+\xi_{i\,j-1}+\delta_{i\,j}+\delta_{i-1\,j-1}+h_{i\,j}+h_{i-1\,j-1}+h_{i-1\,j}+h_{i\,j-1})\bigg)^{\frac{1}{p}}\\
&\leq d(\xi_{00},h_{00})+d(h_{00},\delta_{00})+\bigg(\sum_{i=1}^{n}d^{p}(\xi_{i\,0}+h_{i-1\,0},\xi_{i-1\,0}+h_{i\,0})\bigg)^{\frac{1}{p}}\\
&~~+\bigg(\sum_{i=1}^{n}d^{p}(h_{i\,0}+\delta_{i-1\,0},h_{i-1\,0}+\delta_{i\,0})\bigg)^{\frac{1}{p}}+\bigg(\sum_{j=1}^{m}d(\xi_{0\,j}+h_{0\,j-1},\xi_{0\,j-1}+h_{0\,j})\bigg)^{\frac{1}{p}}\\
&~~+\bigg(\sum_{j=1}^{m}d^{p}(h_{0\,j}+\delta_{0\,j-1},h_{0\,j-1}+\delta_{0\,j})\bigg)^{\frac{1}{p}}\\
&~~+\bigg(\sum_{j=1}^{m}\sum_{i=1}^{n}d^{p}(\xi_{i\,j}+\xi_{i-1\,j-1}+h_{i-1\,j}+h_{i\,j-1},\xi_{i-1\,j}+\xi_{i\,j-1}+h_{i\,j}+h_{i-1\,j-1})\bigg)^{\frac{1}{p}}\\
&~~+\bigg(\sum_{j=1}^{m}\sum_{i=1}^{n}d^{p}(h_{i\,j}+h_{i-1\,j-1}+\delta_{i-1\,j}+\delta_{i\,j-1},h_{i-1\,j}+h_{i\,j-1}+\delta_{i\,j}+\delta_{i-1\,j-1})\bigg)^{\frac{1}{p}}.
 \end{align*}
 Therefore, $\rho'_{p}(\xi,\delta)\leq\rho'_{p}(\xi,h)+\rho'_{p}(h,\delta)$. Thus $\rho'_{p}$ is a metic.

Let $\{\xi^{q}\}$ be a Cauchy sequence in $(M^{(n+1)(m+1)},\rho'_{p})$. For every $\varepsilon>0$, there exists a integer $N$ such that for $q>N$ and each $w\in \mathbb{N}$, $\rho'_{p}(\xi^{q},\xi^{q+w})<\varepsilon$, then $d(\xi_{00}^{q},\xi_{00}^{q+w})\leq\rho'_{p}(\xi^{q},\xi^{q+w})<\varepsilon$. Hence $\{\xi_{00}^{q}\}$ is a Cauchy sequence in $M$. By induction we get that $\{\xi_{ij}^{q}\}$ is a Cauchy sequence for every $i=0,1,\dots,n, j=0,1,\dots,m$. For every $i=0,1,\dots,n, j=0,1,\dots,m$, we denote $\lim_{q\to\infty}\xi_{ij}^{q}=\xi_{ij}$. If $q>N$, then
$\rho'_{p}(\xi^{q},\xi^{q+w})\leq \varepsilon$.
Let $w\to\infty$. Then $\rho'_{p}(\xi^{q},\xi)\leq\varepsilon$,
i.e. $\xi^{q}\to\xi$. Hence $M^{(n+1)(m+1)}$ is complete.

Suppose $B_{ij}$ is precompact for $i=0,1,\dots,n, j=0,1,\dots,m$. Let $\{\xi^{q}\}\in B_{00}\times \dots\times B_{nm}$, and $\{\xi_{ij}^{q}\}\in B_{ij}$,~$i=0,1,2,\dots,n, j=0,1,2,\dots,m$. There exists a subsequence $\{\xi_{00}^{q_{k_{00}}}\}$ which converges to $\xi_{00}$. By $\{\xi_{10}^{q_{k_{00}}}\}\subset B_{10}$, we know that exists a subsequence $\{\xi_{10}^{q_{k_{10}}}\}$ of $\{\xi_{10}^{q_{k_{00}}}\}$ such that $k_{10}\to\infty$, $\xi_{10}^{q_{k_{10}}}\to\xi_{10}$. By induction, there exists a sequence $\{\xi^{q_{k_{nm}}}\}$ such that $\lim_{k_{nm}\to\infty}\xi_{ij}^{q_{k_{nm}}}=\xi_{ij}$ for $i=0,1,\dots,n, j=0,1,\dots,m$. That means $\{\xi^{q_{k_{nm}}}\}$ is convergent. Therefore, $B$ is precompact in $M^{(n+1)(m+1)}$.
\end{proof}

\begin{theorem}\label{t2}
 Let $p\in(0,\infty )$. A set $A\subset {\rm WBV}_{p}(I\times I,M)$ is precompact if the following conditions are satisfied:

 {\rm(i)} the set $A$ is joint equivariated;

 {\rm(ii)} for every $t,s\in I$, the set $\{f(t,s):~f\in A\}$ is a precompact set of $M$.
\end{theorem}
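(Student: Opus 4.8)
The plan is to realize $\rho_p$ as the supremum of the pseudo-metrics $\rho^p_{\Pi\times\Pi^*}$ over the index set $T$ of all pairs of partitions $(\Pi,\Pi^*)$, and then to apply Lemma \ref{l1}. Since $({\rm WBV}_p(I\times I,M),\rho_p)$ is complete by Theorem \ref{t1}, precompactness is equivalent to total boundedness, so it suffices to show that $A$ is totally bounded for $\rho_p$. Lemma \ref{l1} reduces this to two things: that $A$ is $T$-equimetric, and that $A$ is totally bounded for each individual $\rho^p_{\Pi\times\Pi^*}$.

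The first of these is essentially a restatement of hypothesis (i). Indeed, writing $\rho^p_{\Pi\times\Pi^*}(f,g)=d(f(0,0),g(0,0))+V^p_{\Pi\times\Pi^*}(f,g)$, joint equivariation provides, for each $\varepsilon>0$, partitions $\Pi_\varepsilon,\Pi_\varepsilon^*$ with $V_p(f,g)\le\varepsilon+V_p(f,g,\Pi_\varepsilon\times\Pi_\varepsilon^*)$ for all $f,g\in A$; adding $d(f(0,0),g(0,0))$ to both sides gives $\rho_p(f,g)\le\varepsilon+\rho^p_{\Pi_\varepsilon\times\Pi_\varepsilon^*}(f,g)$, which is exactly the $T$-equimetric condition of the definition preceding Lemma \ref{l1}.

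The heart of the argument is the total boundedness of $A$ for a fixed pair of partitions $\Pi:0=t_0<\cdots<t_n=1$ and $\Pi^*:0=s_0<\cdots<s_m=1$. Here I would introduce the evaluation map $\Phi:A\to M^{(n+1)(m+1)}$ sending $f$ to the array $(f(t_i,s_j))_{i,j}$. By hypothesis (ii), each coordinate set $B_{ij}:=\{f(t_i,s_j):f\in A\}$ is precompact in $M$, so by Lemma \ref{l4} the product $B=\prod_{i,j}B_{ij}$ is precompact, hence totally bounded, in $(M^{(n+1)(m+1)},\rho'_p)$, and clearly $\Phi(A)\subseteq B$. Comparing the definitions of $\rho'_p$ and $\rho^p_{\Pi\times\Pi^*}$, the full grid $(\Pi,\Pi^*)$ occurs among the partitions in the supremum defining $\rho'_p$, so $\rho^p_{\Pi\times\Pi^*}(f,g)\le\rho'_p(\Phi(f),\Phi(g))$ for all $f,g\in A$. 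Thus a finite $\varepsilon$-net for $\Phi(A)\subset B$ in $\rho'_p$ pulls back through $\Phi$ to a finite $\varepsilon$-net for $A$ in $\rho^p_{\Pi\times\Pi^*}$, which shows that $A$ is totally bounded for every $\rho^p_{\Pi\times\Pi^*}$.

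With both hypotheses of Lemma \ref{l1} verified, I conclude that $A$ is totally bounded for $\rho_p$, and completeness (Theorem \ref{t1}) then yields that $A$ is precompact. The step I expect to require the most care is the comparison $\rho^p_{\Pi\times\Pi^*}(f,g)\le\rho'_p(\Phi(f),\Phi(g))$ together with the correct identification of the grid value-sets $B_{ij}$ as the precompact sets feeding Lemma \ref{l4}; that is, one must make sure the finite-dimensional surrogate metric $\rho'_p$ genuinely dominates the function-space pseudo-metric restricted to the grid, so that total boundedness transfers along $\Phi$. Everything else is bookkeeping with the definitions.
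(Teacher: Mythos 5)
Your proposal is correct and follows essentially the same route as the paper: verify that joint equivariation gives the $T$-equimetric condition, use the evaluation map onto the grid together with Lemma \ref{l4} to get total boundedness for each fixed $\rho^{p}_{\Pi\times\Pi^{\ast}}$, and conclude via Lemma \ref{l1} and completeness (Theorem \ref{t1}). The only cosmetic difference is that you transfer total boundedness through a domination inequality $\rho^{p}_{\Pi\times\Pi^{\ast}}(f,g)\le\rho'_{p}(\Phi(f),\Phi(g))$, whereas the paper asserts the evaluation map is an isometry onto its image; both suffice.
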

\begin{proof}
 By condition {\rm(i)}, A is $\{\rho_{\Pi\times\Pi^\ast}:\Pi ~and~ \Pi^\ast ~are~ partitions~ of~ \rm I\}$-equimetric. For partitions $\Pi$ and $\Pi^{\ast}$, let us denote $B_{ij}=\{f(t_{i},s_{j}):~f\in A\}$,~$i=0,1,2,\dots,n$,~$j=0,1,2,\dots,m$. We know from condition {\rm(ii)} that $B_{ij}$ is a precompact set of $M$. Let $B=\prod_{j=0}^{m}\prod_{i=0}^{n}B_{ij}$. We know from Lemma \ref{l4} that $B$ is a totally bounded set in $M^{(n+1)(m+1)}$. Then for every $f\in A$, we define $Tf=\{f(t_i,s_j)\}^{n, m}_{i=0,j=0}\in M^{(n+1)(m+1)}$. From the definition, it is easy to see that $T$ is an isometry from $(A, \rho_{\Pi\times\Pi^{\ast}}) $ into $B$. Therefore $A$ is totally bounded in the pseudo-metric $\rho_{\Pi\times\Pi^\ast}$. By Lemma \ref{l1}, $A$ is an totally bounded set in $\rm{BV}(I\times I,M)$. Thus the proof is finished.
\end{proof}

\section{Bivariate bounded Riesz variation spaces}
The univariate bounded Riesz variation spaces were appeared in \cite{ri}. In this section we consider bivariate metric semigroup valued bounded Riesz variation spaces. Let $p\in(1,\infty )$. For a function $f:I\times I\rightarrow M$, a pair of partitions $\Pi$ and $\Pi^{\ast}$ of the unit interval $I$, we denote
\[
V_{p}^{R}(f(\cdot,0),\Pi):= \bigg(\sum_{i=1}^{n}\frac{d^{p}(f(t_{i},0),f(t_{i-1},0))}{(t_{i}-t_{i-1})^{p-1}}\bigg)^{\frac{1}{p}},
\]
\[
V_{p}^{R}(f(0,\cdot),\Pi^{\ast}):=\bigg(\sum_{j=1}^{m}\frac{d^{p}(f(0,s_{j}),f(0,s_{j-1}))}{(s_{j}-s_{j-1})^{p-1}}\bigg)^{\frac{1}{p}},
\]
\[
V_{p,2}^{R}(f,\Pi\times\Pi^{\ast}):=\bigg(\sum_{j=1}^{m}\sum_{i=1}^{n}\frac{d^{p}(f(t_{i},s_{j})+f(t_{i-1},s_{j-1}),f(t_{i},s_{j-1})+f(t_{i-1},s_{j}))}{(t_{i}-t_{i-1})^{p-1}(s_{j}-s_{j-1})^{p-1}}\bigg)^{\frac{1}{p}}.
\]
Moreover, $$V_{p}^{R}(f(\cdot,0)):= \sup V_{p}^{R}(f(\cdot,0),\Pi),$$ $$V_{p}^{R}(f(0,\cdot)):= \sup V_{p}^{R}(f(0,\cdot),\Pi^{\ast}),$$ $$V_{p,2}^{R}(f):= \sup V_{p,2}^{R}(f,\Pi\times\Pi^{\ast}),$$ where all suprema are taken over the indicated partitions.

\begin{definition}\label{d8}
Let $p\in(1,\infty )$. Given a function $f:~I\times I\to M$ if $$V_{p}^{R}(f)= V_{p}^{R}(f(\cdot,0))+V_{p}^{R}(f(0,\cdot))+V_{p,2}^{R}(f)<+\infty$$ then we say $f$ has bounded Riesz variation on $I\times I$ and denote ${\rm RBV}_{p}(I\times I,M)=\{f:I\times I\rightarrow M,V_{p}^{R}(f)<\infty\}$.
\end{definition}

Taking a pair of partitions $\Pi$ and $\Pi^{\ast}$ of the unit interval $I$, for every $f,g\in{\rm RBV}_{p}(I \times I,M)$, we denote
$$V_{p}^{R}(f(\cdot,0),g(\cdot,0),\Pi):= \bigg(\sum_{i=1}^{n}\frac{d^{p}(f(t_{i},0)+g(t_{i-1},0),f(t_{i-1},0)+g(t_{i},0))}{(t_{i}-t_{i-1})^{p-1}}\bigg)^{\frac{1}{p}},$$
$$V_{p}^{R}(f(0,\cdot),g(0,\cdot),\Pi^{\ast}):=\bigg(\sum_{j=1}^{m}\frac{d^{p}(f(0,s_{j})+g(0,s_{j-1}),f(0,s_{j-1})+g(0,s_{j}))}{(s_{j}-s_{j-1})^{p-1}}\bigg)^{\frac{1}{p}},$$ and
\begin{align*}
V_{p,2}^{R}(&f,g,\Pi\times\Pi^{\ast}):=\bigg(\sum_{j=1}^{m}\sum_{i=1}^{n}\frac{1}{(t_{i}-t_{i-1})^{p-1}(s_{j}-s_{j-1})^{p-1}}d^{p}(f(t_{i},s_{j})+f(t_{i-1},s_{j-1})\\
&+g(t_{i},s_{j-1})+g(t_{i-1},s_{j}),g(t_{i},s_{j})+g(t_{i-1},s_{j-1})+f(t_{i},s_{j-1})+f(t_{i-1},s_{j}))\bigg)^{\frac{1}{p}}.
\end{align*}

\begin{definition}\label{d9}
 Let $p\in(1,\infty )$. For every $f,g\in{\rm RBV}_{p}(I \times I,M)$ and each pair of partitions $\Pi$ and $\Pi^{\ast}$, we denote
\begin{align*}
V^{pR}_{\Pi\times\Pi^{\ast}}(f,g)=V_{p}^{R}(f(\cdot,0),g(\cdot,0),\Pi)+V_{R}(f(0,\cdot),g(0,\cdot),\Pi^{\ast})+V_{p,2}^{R}(f,g,\Pi\times\Pi^{\ast}).
\end{align*}
Then the joint variation of two functions $f$, $g$ is defined by $$V_{p}^{R}(f,g):= \sup_{\Pi\times\Pi^\ast} V^{pR}_{\Pi\times\Pi^\ast}(f,g)$$
 where the supremum is taken over the indicated partitions, and
\begin{align*}\rho_{R}(f,g):= \sup_{\Pi\times\Pi^\ast}\rho^{R}_{\Pi\times\Pi^\ast}(f,g)&:=\sup_{\Pi\times\Pi^\ast} \big(d(f(0,0),g(0,0))+V^{pR}_{\Pi\times\Pi^\ast}(f,g)\big)\\
&= d(f(0,0),g(0,0))+V_{p}^{R}(f,g) .\end{align*}
\end{definition}

Let $f,g\in{\rm RBV}_{p}(I\times I,M)$. By (\ref{2}) and Mikowski's inequality, we have $V_{p}^{R}(f,g)\leq V_{p}^{R}(f)+V_{p}^{R}(g)$. Thus $\rho_{R}(f,g)<\infty$, so the Definition \ref{d9} is reasonable.

\begin{lemma}\label{l6}
Let $p\in(1,\infty )$. If $f,g\in{\rm RBV}_{p}(I\times I,M)$, then $V_{p}^{R}(f)\leq V_{p}^{R}(f,g)+V_{p}^{R}(g)$ and $|V_{p}^{R}(f)-V_{p}^{R}(g)|\leq V_{p}^{R}(f,g)$.
\end{lemma}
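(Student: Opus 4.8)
The plan is to mirror the proof of Lemma \ref{l3}, the only new ingredient being the bookkeeping of the Riesz weights inside Minkowski's inequality. Fix an arbitrary pair of partitions $\Pi$ and $\Pi^{\ast}$. At each node I apply the triangle-type inequality (\ref{1}) exactly as in Lemma \ref{l3}: for every $i$,
\[
d(f(t_{i},0),f(t_{i-1},0))\le d(f(t_{i},0)+g(t_{i-1},0),f(t_{i-1},0)+g(t_{i},0))+d(g(t_{i},0),g(t_{i-1},0)),
\]
for every $j$ the analogous inequality for $f(0,\cdot)$, and for every $(i,j)$ the inequality for the mixed second difference
\[
d(f(t_{i},s_{j})+f(t_{i-1},s_{j-1}),f(t_{i},s_{j-1})+f(t_{i-1},s_{j}))\le d(\cdots)+d(g(t_{i},s_{j})+g(t_{i-1},s_{j-1}),g(t_{i},s_{j-1})+g(t_{i-1},s_{j})),
\]
where the suppressed first term on the right is precisely the joint second difference of $f$ and $g$ occurring in $V_{p,2}^{R}(f,g,\Pi\times\Pi^{\ast})$.

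The decisive step is then to divide the $i$-th inequality by $(t_{i}-t_{i-1})^{(p-1)/p}$, the $j$-th by $(s_{j}-s_{j-1})^{(p-1)/p}$, and the $(i,j)$-th by $\big((t_{i}-t_{i-1})(s_{j}-s_{j-1})\big)^{(p-1)/p}$. Since these weights are fixed positive constants once the partition is chosen, they attach to each summand without interacting with $f$ or $g$, so Minkowski's inequality applied to the resulting weighted $\ell^{p}$ sequences yields
\[
V_{p}^{R}(f(\cdot,0),\Pi)\le V_{p}^{R}(f(\cdot,0),g(\cdot,0),\Pi)+V_{p}^{R}(g(\cdot,0),\Pi),
\]
and likewise $V_{p}^{R}(f(0,\cdot),\Pi^{\ast})\le V_{p}^{R}(f(0,\cdot),g(0,\cdot),\Pi^{\ast})+V_{p}^{R}(g(0,\cdot),\Pi^{\ast})$ and $V_{p,2}^{R}(f,\Pi\times\Pi^{\ast})\le V_{p,2}^{R}(f,g,\Pi\times\Pi^{\ast})+V_{p,2}^{R}(g,\Pi\times\Pi^{\ast})$. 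This is exactly where the Riesz case departs from Lemma \ref{l3}: the denominators must be carried through the estimate, but because they are independent of $f$ and $g$ the computation goes through verbatim.

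Taking the supremum over $\Pi$ (resp. $\Pi^{\ast}$, resp. $\Pi\times\Pi^{\ast}$) in each of the three inequalities, and using that each joint partition sum is bounded by its own total, I obtain $V_{p}^{R}(f(\cdot,0))\le V_{p}^{R}(f(\cdot,0),g(\cdot,0))+V_{p}^{R}(g(\cdot,0))$ together with its two analogues. Adding the three and using the decomposition $V_{p}^{R}(f)=V_{p}^{R}(f(\cdot,0))+V_{p}^{R}(f(0,\cdot))+V_{p,2}^{R}(f)$ alongside the corresponding decomposition of $V_{p}^{R}(f,g)$ into its three joint suprema gives $V_{p}^{R}(f)\le V_{p}^{R}(f,g)+V_{p}^{R}(g)$. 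Finally, the joint variation is symmetric in its two arguments, since $d$ is symmetric and $+$ is commutative, so interchanging $f$ and $g$ yields $V_{p}^{R}(g)\le V_{p}^{R}(f,g)+V_{p}^{R}(f)$, and the two estimates combine to $|V_{p}^{R}(f)-V_{p}^{R}(g)|\le V_{p}^{R}(f,g)$.

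The one point requiring care is the decomposition of the joint variation $V_{p}^{R}(f,g)$ as the sum of its three separate suprema, since a priori $V_{p}^{R}(f,g)$ is the supremum of a sum rather than a sum of suprema. Exactly as in the framework underlying Lemmas \ref{l2} and \ref{l3} and the remark following Definition \ref{d9}, this rests on the monotonicity of each Riesz-variation functional under refinement of the partitions, so that passing to a common refinement simultaneously (nearly) maximises all three pieces. Granting this inherited structural fact, the weight bookkeeping in the Minkowski step is the only genuinely new ingredient, and it is routine.
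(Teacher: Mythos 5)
Your argument is correct and follows the paper's proof of Lemma \ref{l6} essentially verbatim: apply inequality (\ref{1}) termwise, absorb the Riesz weights into the weighted $\ell^{p}$ norms before invoking Minkowski's inequality, take suprema over partitions, and use the symmetry of the joint variation to get the two-sided bound. The sum-of-suprema versus supremum-of-sums subtlety you flag at the end is genuine (the paper passes over it silently), and your resolution via monotonicity under refinement is valid in the Riesz case, since the subdivision inequality from (\ref{2}) combines with the Radon-type estimate $(a+b)^{p}/(u+v)^{p-1}\le a^{p}/u^{p-1}+b^{p}/v^{p-1}$ for $p>1$.
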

\begin{proof}
Let $f,g\in{\rm RBV}_{p}(I\times I,M)$. For every partitions of $\Pi$ and $\Pi^{\ast}$. By (\ref{1}),
\begin{align*}
d(f(t_{i},0),f(t_{i-1},0))\leq d(f(t_{i},0)+g(t_{i-1},0),f(t_{i-1},0)+g(t_{i},0))+d(g(t_{i},0),g(t_{i-1},0)),
\end{align*}
\begin{align*}
d(f(0,s_{j}),f(0,s_{j-1}))\leq d(f(0,s_{j})+g(0,s_{j-1}),f(0,s_{j-1})+g(0,s_{j}))+d(g(0,s_{j}),g(0,s_{j-1})),
\end{align*}
and
\begin{align*}
&d(f(t_{i},s_{j})+f(t_{i-1},s_{j-1}),f(t_{i},s_{j-1})+f(t_{i-1},s_{j}))\\
&\leq d(f(t_{i},s_{j})+f(t_{i-1},s_{j-1})+g(t_{i},s_{j-1})+g(t_{i-1},s_{j}),g(t_{i},s_{j})+g(t_{i-1},s_{j-1})+f(t_{i},s_{j-1})\\
&+f(t_{i-1},s_{j}))+d(g(t_{i},s_{j})+g(t_{i-1},s_{j-1})+g(t_{i},s_{j-1})+g(t_{i-1},s_{j})).
\end{align*}
 By Minkowski's inequality, we have
\begin{align*}
V_{p}^{R}(f(\cdot,0),\Pi)&=\bigg(\sum_{i=1}^{n}\frac{d^{p}(f(t_{i},0),f(t_{i-1},0))}{(t_{i}-t_{i-1})^{p-1}}\bigg)^{\frac{1}{p}}\\
&\leq V_{p}^{R}(f(\cdot,0),g(\cdot,0),\Pi)+V_{p}(g(\cdot,0),\Pi),
\end{align*}
\begin{align*}
V_{p}^{R}(f(0,\cdot),\Pi^{\ast})&=\bigg(\sum_{j=1}^{m}\frac{d^{p}(f(0,s_{j}),f(0,s_{j-1}))}{(s_{j}-s_{j-1})^{p-1}}\bigg)^{\frac{1}{p}}\\
&\leq V_{p}^{R}(f(0,\cdot),g(0,\cdot),\Pi^{\ast})+V_{p}^{R}(g(0,\cdot),\Pi^{\ast}),
\end{align*}
and
\begin{align*}
V_{p,2}^{R}(f,\Pi\times\Pi^{\ast})&=\bigg(\sum_{j=1}^{m}\sum_{i=1}^{n}\frac{d^{p}(f(t_{i},s_{j})+f(t_{i-1},s_{j-1}),f(t_{i},s_{j-1})+f(t_{i-1},s_{j}))}{(t_{i}-t_{i-1})^{p-1}(s_{j}-s_{j-1})^{p-1}}\bigg)^{\frac{1}{p}}\\
&\leq V_{p,2}^{R}(f,g,\Pi\times\Pi^{\ast})+V_{p,2}^{R}(g,\Pi\times\Pi^{\ast}).
\end{align*}
Taking suprema over $\Pi$ and $\Pi^{\ast}$ on the last three inequalities, we have
\begin{align*}
V_{p}^{R}(f(\cdot,0))\leq V_{p}^{R}(f(\cdot,0),g(\cdot,0))+V_{p}^{R}(g(\cdot,0)),
\end{align*}
\begin{align*}
V_{p}^{R}(f(0,\cdot))\leq V_{p}^{R}(f(0,\cdot),g(0,\cdot))+V_{p}^{R}(g(0,\cdot)),
\end{align*}
and
\begin{align*}
V_{p,2}^{R}(f)\leq V_{p,2}^{R}(f,g)+V_{p,2}^{R}(g).
\end{align*}
Therefore,
\begin{align*}
V_{p}^{R}(f)\leq V_{p}^{R}(f,g)+V_{p}^{R}(g).
\end{align*}
Similarly, we have
\begin{align*}
V_{p}^{R}(g)\leq V_{p}^{R}(f,g)+V_{p}^{R}(f).
\end{align*}
Therefore, $|V_{p}^{R}(f)-V_{p}^{R}(g)|\leq V_{p}^{R}(f,g)$.
\end{proof}

\begin{theorem}\label{t5}
Let $p\in(1,\infty )$. Then $({\rm RBV}_{p}(I\times I,M),\rho_{R})$ is a complete metric space.
\begin{proof}  $\rho_{R}(f,g)\geq 0$ is evident. If $\rho_{R}(f,g)=0$, then $d(f(0,0),g(0,0))+V_{p}^{R}(f,g)=0$. Thus $f(0,0)=g(0,0)$. For every $t,s>0$, taking special partitions partitions $0=t_{0}<t<t_{1}=1$, and $0=s_{0}<s<s_{1}=1$, we have
 \begin{align*}
 &\bigg(\dfrac{d^{p}(f(t,0)+g(0,0),f(0,0)+g(t,0))}{(t-0)^{p-1}}+\dfrac{d^{p}(f(1,0)+g(t,0),f(t,0)+g(1,0))}{(1-t)^{p-1}}\bigg)^{\frac{1}{p}}\\
  &+\bigg(\dfrac{d^{p}(f(0,s)+g(0,0),f(0,0)+g(0,s))}{(s-0)^{p-1}}+\dfrac{d^{p}(f(0,1)+g(0,s),f(0,s)+g(0,1))}{(1-s)^{p-1}}\bigg)^{\frac{1}{p}}\\
  &+\bigg(\dfrac{d^{p}(f(t,s)+f(0,0)+g(t,0)+g(0,s),f(t,0)+f(0,s)+g(t,s)+g(0,0))}{(t-0)^{p-1}(s-0)^{p-1}}\\
  &+\dfrac{d^{p}(f(1,1)+f(t,s)+g(t,1)+g(1,s),f(t,1)+f(1,s)+g(t,s)+g(1,1))}{(1-t)^{p-1}(1-s)^{p-1}}\bigg)^{\frac{1}{p}}=0.
   \end{align*}
Hence
\begin{equation*}
\left\{
\begin{aligned}
&d^{p}(f(t,0)+g(0,0),f(0,0)+g(t,0))=0,\\
&d^{p}(f(0,s)+g(0,0),f(0,0)+g(0,s))=0,\\
&d^{p}(f(t,s)+f(0,0)+g(t,0)+g(0,s),f(t,0)+f(0,s)+g(t,s)+g(0,0))=0.\\
\end{aligned}
\right .
\end{equation*}
Therefore $f(t,0)=g(t,0)$,~$f(0,s)=g(0,s)$,~$f(t,s)=g(t,s)$ for every $t,s\in I$.

Again,
\begin{align*}
 V_{p}^{R}(f(\cdot,0),g(\cdot,0),\Pi)&=\bigg(\sum_{i=1}^{n}\frac{1}{(t_{i}-t_{i-1})^{p-1}}d^{p}(f(t_{i},0)+g(t_{i-1},0)+h(t_{i},0)+h(t_{i-1},0),\\
 &~~~~~~~~f(t_{i-1},0)+g(t_{i},0)+h(t_{i},0)+h(t_{i-1},0))\bigg)^{\frac{1}{p}}\\
  &\leq\bigg(\sum_{i=1}^{n}\frac{1}{(t_{i}-t_{i-1})^{p-1}}(d(f(t_{i},0)+h(t_{i-1},0),f(t_{i-1},0)+h(t_{i},0))\\
  &~~+d(g(t_{i-1},0)+h(t_{i},0),g(t_{i},0)+h(t_{i-1},0)))^{p}\bigg)^{\frac{1}{p}}\\
  &\leq\bigg(\sum_{i=1}^{n}\frac{d^{p}(f(t_{i},0)+h(t_{i-1},0),f(t_{i-1},0)+h(t_{i},0))}{(t_{i}-t_{i-1})^{p-1}}\bigg)^{\frac{1}{p}}\\
  &~~+\bigg(\sum_{i=1}^{n}\frac{d^{p}(g(t_{i-1},0)+h(t_{i},0),g(t_{i},0)+h(t_{i-1},0))}{(t_{i}-t_{i-1})^{p-1}}\bigg)^{\frac{1}{p}}\\
  &=V_{p}^{R}(f(\cdot,0),h(\cdot,0),\Pi)+V_{p}^{R}(h(\cdot,0),g(\cdot,0),\Pi).
 \end{align*}
Similarly, $V_{p}^{R}(f(0,\cdot),g(0,\cdot),\Pi^{\ast})\leq V_{p}^{R}(f(0,\cdot),h(0,\cdot),\Pi^{\ast})+V_{p}^{R}(h(0,\cdot),g(0,\cdot),\Pi^{\ast})$ and $V_{p,2}^{R}(f,g,\Pi\times\Pi^{\ast})\leq V_{p,2}^{R}(f,h,\Pi\times\Pi^{\ast})+V_{p,2}^{R}(h,g,\Pi\times\Pi^{\ast})$. On the other hand $d(f(0,0),g(0,0))\leq d(f(0,0),h(0,0))+d(h(0,0),g(0,0))$, so $\rho_{R}(f,g)\leq\rho_{R}(f,h)+\rho_{R}(h,g)$. Therefore $\rho_{R}$ is a metric of ${\rm RBV}_{p}(I\times I,M)$.

Now we prove ${\rm RBV}_{p}(I\times I,M)$ is complete. Let $\{f_{u}\}_u$ be a \rm Cauchy sequence of ${\rm RBV}_{p}(I\times I,M)$. Then for $\varepsilon>0$, there exists integer $N\in\mathbb{N}$, such that $\rho_{R}(f_u,f_v)\leq\varepsilon$ for $u,v>N$. For every $u,v>N$, we have $d(f_u(0,0),f_v(0,0))\leq\varepsilon$ and $V_{p}^{R}(f_u,f_v)\leq\varepsilon$. By the completeness of $M$, there exists $f(0,0)$ such that $\{f_{u}(0,0)\}_{u}$ converges uniformly to $f(0,0)$. Fixing an arbitrary point $t,s\in(0,1]$, takes special partitions $\{0,t,1\}$,~$\{0,s,1\}$, for every $u,v\in N$,
 \begin{align*}
 &(d^{p}(f_{u}(t,s),f_{v}(t,s)))^{\frac{1}{p}}-(d^{p}(f_{u}(0,0),f_{v}(0,0)))^{\frac{1}{p}}\\
 &\leq(d^{p}(f_{u}(t,s)+f_{v}(0,0),f_{u}(0,0)+f_{v}(t,s)))^{\frac{1}{p}}\\
 &\leq V_{p}^{R}(f_{u},f_{v})\leq\varepsilon.
  \end{align*}
Then $(d^{p}(f_{u}(t,s),f_{v}(t,s)))^{1/p}\leq2\varepsilon$ for every $u,v\in N$. It means that $\{f_{u}(t,s)\}_u$ is a \rm Cauchy sequence of $M$, then exists $f(t,s)$, such that $\{f_{u}(t,s)\}_{u}$ converges uniformly to $f(t,s)$. Thus $\{f_{u}\}_u$ converges uniformly to $f$ on $I\times I$.

By Lemma \ref{l6}, we have $V_{p}^{R}(f_{u})\leq C$ is bounded on ${\rm RBV}_{p}(I\times I,M)$. Let $V_{p}^{R}(f_{u})\leq C$. By Definition \ref{d9}, we have $V_{p}^{R}(f_{u}(\cdot,0))+V_{p}^{R}(f_{u}(0,\cdot))+V_{p,2}^{R}(f_{u})\leq C$. Let $u\to\infty$, then $V_{p}^{R}(f)\leq C$. Moreover, $V_{p}^{R}(f_{u},f_{v})\leq \varepsilon$ for $u,v>N$, then $V_{p}^{R}(f_{u}(\cdot,0),f_{v}(\cdot,0))+V_{p}^{R}(f_{u}(0,\cdot),f_{v}(0,\cdot))+V_{p,2}^{R}(f_{u},f_{v})\leq\varepsilon$. Let $v\to\infty$, then $V_{p}^{R}(f_{u},f)\leq \varepsilon$. Thus $\rho_{R}(f_u,f)\leq 2\varepsilon$ for $u>N$. Therefore $({\rm RBV}_{p}(I\times I,M),\rho_{R})$ is a complete metric space.
\end{proof}\end{theorem}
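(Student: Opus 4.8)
The plan is to follow the same two-stage template used for Theorem~\ref{t1}: first verify that $\rho_{R}$ is a genuine metric, and then establish completeness by producing a candidate limit pointwise and controlling its Riesz variation. Throughout, I would exploit the fact that once a pair of partitions $\Pi,\Pi^{\ast}$ is fixed, the weights $(t_{i}-t_{i-1})^{p-1}$ and $(s_{j}-s_{j-1})^{p-1}$ are fixed positive constants, so that every passage to a limit happens inside a finite weighted sum and reduces to the continuity of $d$.

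For the metric axioms, non-negativity is immediate. To get positive definiteness I would assume $\rho_{R}(f,g)=0$, which forces $f(0,0)=g(0,0)$ and $V_{p}^{R}(f,g)=0$; then, inserting the two-point partitions $\{0,t,1\}$ and $\{0,s,1\}$ into $V_{p}^{R}(f,g)$ and discarding the positive weights, each summand must vanish, yielding $f(t,0)=g(t,0)$, $f(0,s)=g(0,s)$ and $f(t,s)=g(t,s)$ for all $t,s$, hence $f=g$. Symmetry is visible from the defining expressions. For the triangle inequality I would interpolate a third function $h$: applying inequality~(\ref{2}) to split each argument of $d$ into an $f$-to-$h$ part and an $h$-to-$g$ part, and then Minkowski's inequality to the resulting weighted $\ell^{p}$-sums (the common weights factor through unchanged), gives $V^{pR}_{\Pi\times\Pi^{\ast}}(f,g)\le V^{pR}_{\Pi\times\Pi^{\ast}}(f,h)+V^{pR}_{\Pi\times\Pi^{\ast}}(h,g)$ termwise; adding the triangle inequality for $d(f(0,0),g(0,0))$ and taking suprema over partitions yields $\rho_{R}(f,g)\le\rho_{R}(f,h)+\rho_{R}(h,g)$.

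For completeness, let $\{f_{u}\}$ be Cauchy. Since $d(f_{u}(0,0),f_{v}(0,0))\le\rho_{R}(f_{u},f_{v})$, the sequence $\{f_{u}(0,0)\}$ is Cauchy in the complete space $M$ and converges to some $f(0,0)$. To obtain pointwise convergence at an arbitrary $(t,s)$ I would again use the two-point partitions: translation invariance of $d$ together with inequality~(\ref{1}) bounds $d(f_{u}(t,s),f_{v}(t,s))$ in terms of $d(f_{u}(0,0),f_{v}(0,0))$ and a single term of $V_{p}^{R}(f_{u},f_{v})$, so $\{f_{u}(t,s)\}$ is Cauchy and has a limit $f(t,s)$, the convergence being uniform in $(t,s)$. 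By Lemma~\ref{l6} the numbers $V_{p}^{R}(f_{u})$ are bounded by some constant $C$, and freezing any partition and letting $u\to\infty$ inside the finite weighted sum gives $V_{p}^{R}(f)\le C$, so $f\in{\rm RBV}_{p}(I\times I,M)$. Finally, fixing $u>N$ and a partition, I would let $v\to\infty$ in the finite weighted sum defining $V^{pR}_{\Pi\times\Pi^{\ast}}(f_{u},f_{v})\le\varepsilon$ to get $V^{pR}_{\Pi\times\Pi^{\ast}}(f_{u},f)\le\varepsilon$, and taking the supremum over partitions together with the estimate for the base point yields $\rho_{R}(f_{u},f)\le 2\varepsilon$, so $f_{u}\to f$ in the space.

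The only genuinely delicate point, and the step I expect to be the main obstacle, is the interchange of the supremum over partitions with the limit in $u$ (or $v$): the passages $V_{p}^{R}(f_{u})\to V_{p}^{R}(f)$ and $V_{p}^{R}(f_{u},f)\le\varepsilon$ must be extracted one partition at a time and only then supremized, since the Riesz weights make the functional a supremum of a whole family of finite weighted $\ell^{p}$-norms rather than a single norm. Keeping the partition frozen during each limit passage—exactly as in the proof of Theorem~\ref{t1}—is what makes every step reduce to the continuity of $d$ on the finitely many relevant points.
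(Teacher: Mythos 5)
Your proposal is correct and follows essentially the same route as the paper's own proof: two-point partitions for positive definiteness, interpolation through a third function $h$ via inequality~(\ref{2}) and Minkowski for the triangle inequality, and for completeness a pointwise limit at $(0,0)$ and then at arbitrary $(t,s)$, followed by Lemma~\ref{l6} and frozen-partition limit passages before taking suprema. The delicate point you flag—passing to the limit one partition at a time before supremizing—is exactly how the paper handles it.
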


Similar to Lemma \ref{l4}, we have the following lemma.
\begin{lemma}\label{l7}
Let $p\in(1,\infty )$ and $m,n\in \mathbb{N}$. For elements $\xi=(\xi_{ij})_{i=0,j=0}^{n,m}$ and $\delta=(\delta_{ij})_{i=0,j=0}^{n,m}\in M^{(n+1)(m+1)}$, let
\begin{align*}
\rho'_{R}(&\xi,\delta)=\sup_{\Pi\times\Pi^{\ast}}\bigg(d(\xi_{00},\delta_{00})+\bigg(\sum_{i=1}^{n}\frac{d^{p}(\xi_{i\,0}+\delta_{i-1\,0},\xi_{i-1\,0}+\delta_{i\,0})}{(t_{i}-t_{i-1})^{p-1}}\bigg)^{\frac{1}{p}}\\
&~~+\bigg(\sum_{j=1}^{m}\frac{d^{p}(\xi_{0\,j}+\delta_{0\,j-1},\xi_{0\,j-1}+\delta_{0\,j})}{(s_{j}-s_{j-1})^{p-1}}\bigg)^{\frac{1}{p}}\\
&~~+\bigg(\sum_{j=1}^{m}\sum_{i=1}^{n}\frac{d^{p}(\xi_{i\,j}+\xi_{i-1\,j-1}+\delta_{i-1\,j}+\delta_{i\,j-1},\xi_{i-1\,j}+\xi_{i\,j-1}+\delta_{i\,j}+\delta_{i-1\,j-1})}{(t_{i}-t_{i-1})^{p-1}(s_{j}-s_{j-1})^{p-1}}\bigg)^{\frac{1}{p}}\bigg).
\end{align*}
Then $(M^{(n+1)(m+1)},\rho'_{R})$ is complete. Moreover, let $B_{ij}$ be precompact in $M$ for $i=0,\dots,n,j=0,\dots,m$. Then $B=\prod_{i=0,j=0}^{n,m}B_{ij}$ be precompact in $(M^{(n+1)(m+1)},\rho'_{R})$.
\end{lemma}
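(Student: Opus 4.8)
The plan is to follow the template of the proof of Lemma \ref{l4}, the only new feature being the Riesz weights $(t_{i}-t_{i-1})^{p-1}$ and $(s_{j}-s_{j-1})^{p-1}$ in the denominators, and the whole point of the argument is to show these weights are harmless. First I would verify that $\rho'_{R}$ is a metric. Nonnegativity is immediate. For the identity of indiscernibles, if $\rho'_{R}(\xi,\delta)=0$ then fixing any single partition already forces every summand to vanish; since each weight is a fixed positive number, $d(\xi_{00},\delta_{00})=0$ gives $\xi_{00}=\delta_{00}$, and then the $i=1$ term $d^{p}(\xi_{1\,0}+\delta_{0\,0},\xi_{0\,0}+\delta_{1\,0})/(t_{1}-t_{0})^{p-1}=0$ together with translation invariance yields $\xi_{10}=\delta_{10}$; an induction over $i$ and $j$ exactly as in Lemma \ref{l4} gives $\xi=\delta$. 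Symmetry is visible from the defining expression. For the triangle inequality I would insert an intermediate point $h=(h_{ij})$, apply inequality (\ref{1}) inside each $d$, and then apply Minkowski's inequality to each of the three weighted sums; because the same weight multiplies the two split terms in every summand, that common factor carries through the Minkowski step and the estimate $\rho'_{R}(\xi,\delta)\le\rho'_{R}(\xi,h)+\rho'_{R}(h,\delta)$ follows verbatim from that of Lemma \ref{l4}.

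Next I would prove completeness. Let $\{\xi^{q}\}$ be Cauchy for $\rho'_{R}$. Since the unweighted term $d(\xi_{00}^{q},\xi_{00}^{q+w})$ is dominated by $\rho'_{R}(\xi^{q},\xi^{q+w})$, the $(0,0)$ component is Cauchy in $M$. To pass to a general component I would fix one convenient partition (for instance a uniform one) so that every weight becomes a fixed positive constant; then a single summand of the relevant weighted sum is controlled by $\rho'_{R}(\xi^{q},\xi^{q+w})$, and combining this with inequality (\ref{1}) and the already-established Cauchyness of the lower-index components shows, by the same induction on $(i,j)$ as in Lemma \ref{l4}, that each $\{\xi_{ij}^{q}\}$ is Cauchy. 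Completeness of $M$ then furnishes a componentwise limit $\xi$, and letting $w\to\infty$ in $\rho'_{R}(\xi^{q},\xi^{q+w})\le\varepsilon$ shows $\xi^{q}\to\xi$ in $\rho'_{R}$, so $(M^{(n+1)(m+1)},\rho'_{R})$ is complete.

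Finally, for precompactness I would run the finite diagonal argument of Lemma \ref{l4} unchanged: given a sequence in $B=\prod_{i=0,j=0}^{n,m}B_{ij}$, precompactness of $B_{00}$ extracts a subsequence converging in the $(0,0)$ slot, precompactness of $B_{10}$ a further subsequence converging in the $(1,0)$ slot, and so on through the finitely many indices; the resulting diagonal subsequence converges componentwise, hence in $\rho'_{R}$, to a point of $B$. I expect the only point needing real care to be the interplay of the Riesz weights with the two lower-bounding steps, namely the identity of indiscernibles and the extraction of componentwise Cauchyness: one must observe that freezing a single partition turns every weight into a harmless positive constant, and separately check that the common weight appearing in each summand permits exactly the Minkowski estimate used in the unweighted Lemma \ref{l4}.
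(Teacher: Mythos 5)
Your proposal is correct and follows essentially the same route as the paper, which gives no separate argument for this lemma but simply defers to the proof of Lemma \ref{l4}; your adaptation --- observing that once a partition is fixed the Riesz weights are fixed positive constants that pass through inequality (\ref{1}) and Minkowski's inequality as common factors --- is exactly the intended modification. The metric axioms, the componentwise Cauchy induction for completeness, and the finite diagonal extraction for precompactness all match the paper's template.
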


\begin{definition}\label{d10}
Let $p\in(1,\infty )$. A set $A\subset({\rm RBV}_{p}(I\times I,M),\rho_{R})$ is said to be joint equivariated, if for every $\varepsilon>0$, there exist partitions $\Pi_{\varepsilon}$ and $\Pi_{\varepsilon}^{\ast}$ such that for every $f,g\in A$,
$$V_{p}^{R}(f,g)\leq\varepsilon+V_{p}^{R}(f,g,\Pi_{\varepsilon}\times\Pi_{\varepsilon}^{\ast}),$$ where $$V_{p}^{R}(f,g,\Pi_{\varepsilon}\times\Pi_{\varepsilon}^{\ast}):= V_{p}^{R}(f(\cdot,0),g(\cdot,0),\Pi_{\varepsilon})+V_{p}^{R}(f(0,\cdot),g(0,\cdot),\Pi_{\varepsilon}^{\ast})+V_{p,2}^{R}(f,g,\Pi_{\varepsilon}\times\Pi_{\varepsilon}^{\ast}).$$
\end{definition}

\begin{theorem}\label{t6}
Let $p\in(1,\infty)$. A set $A\subset{\rm RBV}_{p}(I\times I,M)$ is precompact, if the following conditions are satisfied:

 {\rm(i)} $A$ is joint equivariated;

 {\rm(ii)} for every $t,s\in I$, the set $\{f(t,s):~f\in A\}$ is precompact.
\end{theorem}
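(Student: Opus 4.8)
The plan is to run the proof of Theorem~\ref{t2} almost verbatim, replacing every Wiener quantity by its Riesz analogue and appealing to Lemma~\ref{l7} instead of Lemma~\ref{l4}. The guiding idea is that $\rho_R=\sup_{\Pi\times\Pi^\ast}\rho^R_{\Pi\times\Pi^\ast}$ is the supremum of the family of pseudo-metrics $\{\rho^R_{\Pi\times\Pi^\ast}\}$ indexed by all pairs of partitions; I will show $A$ is totally bounded for each single $\rho^R_{\Pi\times\Pi^\ast}$ and then splice these together with the equimetric hypothesis through Lemma~\ref{l1}. Since, by Theorem~\ref{t5}, $({\rm RBV}_p(I\times I,M),\rho_R)$ is complete, total boundedness of $A$ is the same as precompactness, so this suffices.

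First I would reformulate condition (i). By Definition~\ref{d10}, for each $\varepsilon>0$ there are partitions $\Pi_\varepsilon,\Pi_\varepsilon^\ast$ with $V_p^R(f,g)\le\varepsilon+V_p^R(f,g,\Pi_\varepsilon\times\Pi_\varepsilon^\ast)$ for all $f,g\in A$. Adding the common term $d(f(0,0),g(0,0))$ to both sides and using Definition~\ref{d9}, this reads $\rho_R(f,g)\le\varepsilon+\rho^R_{\Pi_\varepsilon\times\Pi_\varepsilon^\ast}(f,g)$, which is precisely the statement that $A$ is $\{\rho^R_{\Pi\times\Pi^\ast}\}$-equimetric. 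Next, fixing an arbitrary pair of partitions $\Pi,\Pi^\ast$, I set $B_{ij}=\{f(t_i,s_j):f\in A\}$ for $i=0,\dots,n$, $j=0,\dots,m$. Condition (ii) makes each $B_{ij}$ precompact in $M$, so Lemma~\ref{l7} shows that $B=\prod_{i=0,j=0}^{n,m}B_{ij}$ is totally bounded in $(M^{(n+1)(m+1)},\rho'_R)$, where $\rho'_R$ carries exactly the Riesz weights attached to this partition.

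The link between the two pictures is the evaluation map $Tf=(f(t_i,s_j))_{i=0,j=0}^{n,m}$. Comparing Definition~\ref{d9} with the formula for $\rho'_R$ in Lemma~\ref{l7} term by term gives $\rho^R_{\Pi\times\Pi^\ast}(f,g)=\rho'_R(Tf,Tg)$, so $T$ is an isometry of $(A,\rho^R_{\Pi\times\Pi^\ast})$ onto a subset of $B$; because a subset of a totally bounded set is totally bounded and isometries preserve this property, $A$ is totally bounded for $\rho^R_{\Pi\times\Pi^\ast}$. As the partition pair was arbitrary, this holds for every member of the family, and Lemma~\ref{l1} then upgrades it to total boundedness of $A$ for $\rho_R$. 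Invoking the completeness from Theorem~\ref{t5} completes the argument.

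The step I expect to demand the most care is the isometry identification. Unlike the Jordan and Wiener cases, the Riesz pseudo-metric $\rho^R_{\Pi\times\Pi^\ast}$ carries the explicit weights $(t_i-t_{i-1})^{-(p-1)}$ and $(s_j-s_{j-1})^{-(p-1)}$, so I must check that $\rho'_R$ is evaluated with the very same partition that produces the grid $\{(t_i,s_j)\}$ defining $B$; a mismatch in the weights or in the index ranges is the only place the correspondence $\rho^R_{\Pi\times\Pi^\ast}=\rho'_R\circ(T\times T)$ could fail, and verifying it is otherwise a routine term-by-term comparison.
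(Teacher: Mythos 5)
Your proposal is correct and follows exactly the route the paper intends: the paper's own ``proof'' of Theorem~\ref{t6} is just the remark that Lemma~\ref{l1}, Lemma~\ref{l7} and the argument of Theorem~\ref{t2} carry over verbatim, which is precisely what you execute (equimetric reformulation of joint equivariation, precompactness of the product grid, the evaluation isometry, then Lemma~\ref{l1} and completeness from Theorem~\ref{t5}). Your extra caution about matching the Riesz weights to the fixed partition in the isometry step is a sensible detail the paper leaves implicit.
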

By Lemma \ref{l1}, Lemma \ref{l7} and the same argument in the proof of Theorem \ref{t2} we can prove Theorem \ref{t6}, we leave the detail here.

\section{Bivariate bounded Waterman variation spaces}
In 1972, Waterman introduced the definition of $\Lambda$ bounded variation and applied it to problems related to Fourier series in \cite{28}. In this section, we will consider bounded $\Lambda$ variation functions with values in metric semigroup.
\begin{definition}\label{d4}
Let $\Lambda=\{\lambda_{i,j}\}$ be a decreasing sequences for the first and second variables respectively, and where $\lambda_{i,j}$ is a positive number, such that $\lambda_{n,m}\to 0$ as $n,m\to\infty$ and $\sum_{i,j=1}^{\infty}\lambda_{i,j}=\infty$.
\end{definition}

Given a function $f:I\times I\rightarrow M$, take a pair of partitions $\Pi$ and $\Pi^{\ast}$ of the unit interval $I$, we denote
\[
V_{\Lambda}(f(\cdot,0),\Pi):= \sum_{i=1}^{n}\lambda_{i,1}(d(f(t_{i},0),f(t_{i-1},0))),
\]
\[
V_{\Lambda}(f(0,\cdot),\Pi^{\ast}):=\sum_{j=1}^{m}\lambda_{1,j}(d(f(0,s_{j}),f(0,s_{j-1}))),
\]
and
\[
V_{\Lambda,2}(f,\Pi\times\Pi^{\ast}):=\sum_{j=1}^{m}\sum_{i=1}^{n}\lambda_{i,j}(d(f(t_{i},s_{j})+f(t_{i-1},s_{j-1}),f(t_{i},s_{j-1})+f(t_{i-1},s_{j}))).
\]
Moreover, $$V_{\Lambda}(f(\cdot,0)):= \sup V_{\Lambda}(f(\cdot,0),\Pi),$$ $$V_{\Lambda}(f(0,\cdot)):= \sup V_{\Lambda}(f(0,\cdot),\Pi^{\ast}),$$ and $$V_{\Lambda,2}(f):= \sup V_{\Lambda,2}(f,\Pi\times\Pi^{\ast}),$$ where all suprema are taken over the indicated partitions.

\begin{definition}\label{d5}
Let $\Lambda=\{\lambda_{i,j}\}$ as in Definition \ref{d4}. For a bounded function $f:~I\times I\to M$. The quantity $$V_{\Lambda}(f)= V_{\Lambda}(f(\cdot,0))+V_{\Lambda}(f(0,\cdot))+V_{\Lambda,2}(f)$$ is called the total Waterman of $f$ on $I\times I$. If $V(f)<+\infty$, we say that $f$ has bounded Waterman variation on $I\times I$ and write $\Lambda{\rm BV}(I\times I,M)=\{f:I\times I\rightarrow M,V_{\Lambda}(f)<\infty\}$.
\end{definition}

If $f\in\Lambda{\rm BV}(I\times I,M)$, then $f(\cdot,s)\in\Lambda{\rm BV}(I,M)$ for all $s\in I$. Similarly, $f(t,\cdot)\in\Lambda{\rm BV}(I,M)$, $V_{\Lambda}(f(\cdot,s),\Pi)\leq V_{\Lambda}(f(\cdot,0),\Pi)+V_{\Lambda,2}(f,\Pi\times\Pi^{\ast})$, and $V_{\Lambda}(f(t,\cdot),\Pi^{\ast})\leq V_{\Lambda}(f(0,\cdot),\Pi^{\ast})+V_{\Lambda,2}(f,\Pi\times\Pi^{\ast})$ for all $t\in I$.

For a pair of partitions $\Pi$ and $\Pi^{\ast}$ of the unit interval $I$, and every $f,g\in\Lambda{\rm BV}(I\times I,M)$, we denote
$$V_{\Lambda}(f(\cdot,0),g(\cdot,0),\Pi):= \sum_{i=1}^{n}\lambda_{i,1}(d(f(t_{i},0)+g(t_{i-1},0),f(t_{i-1},0)+g(t_{i},0))),$$
$$V_{\Lambda}(f(0,\cdot),g(0,\cdot),\Pi^{\ast}):=\sum_{j=1}^{m}\lambda_{1,j}(d(f(0,s_{j})+g(0,s_{j-1}),f(0,s_{j-1})+g(0,s_{j}))),$$ and
\begin{align*}
V_{\Lambda,2}(f,g,\Pi\times\Pi^{\ast}):=\sum_{j=1}^{m}\sum_{i=1}^{n}\lambda_{i,j}(d(&f(t_{i},s_{j})+f(t_{i-1},s_{j-1})+g(t_{i},s_{j-1})+g(t_{i-1},s_{j}),\\
&g(t_{i},s_{j})+g(t_{i-1},s_{j-1})+f(t_{i},s_{j-1})+f(t_{i-1},s_{j}))).
\end{align*}

\begin{definition}\label{d6}
Let $\Lambda=\{\lambda_{i,j}\}$ as in Definition \ref{d4}. For every $f,g\in\Lambda{\rm BV}(I\times I,M)$ and each pair of partitions $\Pi$ and $\Pi^{\ast}$, we denote $$V^{\Lambda}_{\Pi\times\Pi^\ast}(f,g):=V_{\Lambda}(f(\cdot,0),g(\cdot,0),\Pi)+V_{\Lambda}(f(0,\cdot),g(0,\cdot),\Pi^{\ast})+V_{\Lambda,2}(f,g,\Pi\times\Pi^{\ast}).$$
Then the joint variation of two functions $f$, $g$ is defined by $$V_{\Lambda}(f,g):= \sup_{\Pi\times\Pi^\ast} V^{\Lambda}_{\Pi\times\Pi^\ast}(f,g)$$
 where the supremum is taken over the indicated partitions, and we denote
\begin{align*}
\rho_{\Lambda}(f,g):= \sup_{\Pi\times\Pi^\ast}\rho^{\Lambda}_{\Pi\times\Pi^\ast}(f,g)&=:\sup_{\Pi\times\Pi^\ast} (d(f(0,0),g(0,0))+V^{\Lambda}_{\Pi\times\Pi^\ast}(f,g))\\
&= d(f(0,0),g(0,0))+V_{\Lambda}(f,g) ,
\end{align*} where $V_{\Lambda}(f,g)$ the supremum is taken over the indicated partitions.
\end{definition}

\begin{theorem}\label{t3}
The bounded variation space $(\Lambda{\rm BV}(I\times I,M),\rho_{\Lambda})$ is a complete metric space.
\end{theorem}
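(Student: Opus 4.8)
The plan is to follow the same two-part template already used in the proofs of Theorem~\ref{t1} and Theorem~\ref{t5}: first verify that $\rho_{\Lambda}$ is a genuine metric on $\Lambda{\rm BV}(I\times I,M)$, and then establish completeness. For the metric axioms, nonnegativity and symmetry are immediate from the definition, so the content lies in positive-definiteness and the triangle inequality. To show $\rho_{\Lambda}(f,g)=0\implies f=g$, I would argue exactly as in Theorem~\ref{t5}: from $\rho_{\Lambda}(f,g)=0$ we get $f(0,0)=g(0,0)$, and then for arbitrary $t,s\in(0,1]$ test the special partitions $\{0,t,1\}$ and $\{0,s,1\}$. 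Since every $\lambda_{i,j}>0$, each term $\lambda_{i,j}\,d(\cdots)$ in $V_{\Lambda}(f,g)$ must vanish, which forces (using translation invariance of $d$) $f(t,0)=g(t,0)$, $f(0,s)=g(0,s)$, and then $f(t,s)=g(t,s)$, giving $f=g$ on all of $I\times I$.

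For the triangle inequality I would introduce an intermediate function $h$ and, on a fixed pair of partitions, insert $+h(t_i,0)+h(t_{i-1},0)$ (and the analogous balancing terms for the other two pieces) inside each metric, then apply inequality~(\ref{2}) termwise to split $d(\,\cdot\,,\,\cdot\,)$ into a sum of two distances, one involving $(f,h)$ and one involving $(h,g)$. Because the Waterman variation is a \emph{linear} combination $\sum \lambda_{i,j} d(\cdots)$ rather than an $\ell^p$-type expression, no Minkowski inequality is even needed here: the splitting is purely the triangle inequality for $d$ weighted by the nonnegative coefficients $\lambda_{i,j}$, followed by separating the single sum into two sums. Taking the supremum over $\Pi\times\Pi^{\ast}$ of each of the three pieces, together with $d(f(0,0),g(0,0))\le d(f(0,0),h(0,0))+d(h(0,0),g(0,0))$, yields $\rho_{\Lambda}(f,g)\le\rho_{\Lambda}(f,h)+\rho_{\Lambda}(h,g)$.

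For completeness, let $\{f_u\}$ be $\rho_{\Lambda}$-Cauchy. As before, $d(f_u(0,0),f_v(0,0))\le\rho_{\Lambda}(f_u,f_v)$ gives convergence of $\{f_u(0,0)\}$ to some $f(0,0)$ by completeness of $M$. To obtain pointwise convergence at a general $(t,s)$, I would again use the special partitions $\{0,t,1\}$ and $\{0,s,1\}$ and estimate $|d(f_u(t,s),f_v(t,s))-d(f_u(0,0),f_v(0,0))|$ from above by a constant multiple of $V_{\Lambda}(f_u,f_v)$ (the constant coming from the relevant finitely many $\lambda_{i,j}$), exactly mirroring the displayed chain in Theorem~\ref{t1}; this shows $\{f_u(t,s)\}$ is Cauchy in $M$ and defines the pointwise limit $f$. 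Boundedness of $V_{\Lambda}(f_u)$ follows from the Waterman analogue of Lemma~\ref{l3}/Lemma~\ref{l6} (which bounds $|V_{\Lambda}(f_u)-V_{\Lambda}(f_v)|$ by $V_{\Lambda}(f_u,f_v)$), so for any fixed partitions the finite sum defining $V_{\Lambda}(f_u)$ is $\le E$; letting $u\to\infty$ term-by-term gives $V_{\Lambda}(f)\le E$, hence $f\in\Lambda{\rm BV}(I\times I,M)$. Finally, fixing $u>N$ in the finite-partition expression for $V_{\Lambda}(f_u,f_v)\le\varepsilon$ and letting $v\to\infty$ yields $V_{\Lambda}(f_u,f)\le\varepsilon$, so $\rho_{\Lambda}(f_u,f)\le2\varepsilon$ and $f_u\to f$ in the space.

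The main obstacle is the same delicate passage to the limit that appears in the earlier proofs: the quantities $V_{\Lambda}(f)$ and $V_{\Lambda}(f_u,f)$ are defined as suprema over \emph{all} partitions, so I can only pass $v\to\infty$ (or $u\to\infty$) \emph{inside} a fixed finite partition sum, where continuity of $d$ and of the finitely many weights $\lambda_{i,j}$ is available, and only afterwards take the supremum. Care must also be taken that the Waterman weights are indexed so that the one-dimensional edge sums use $\lambda_{i,1}$ and $\lambda_{1,j}$ while the mixed sum uses $\lambda_{i,j}$, matching the definitions above; since these weights are fixed positive constants independent of $f,g$, they pose no difficulty beyond bookkeeping, and the proof goes through as in the $p$-variation and Riesz cases.
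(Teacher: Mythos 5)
Your proposal is correct and is essentially the argument the paper intends: the paper gives no separate proof of Theorem~\ref{t3}, merely remarking that it is ``similar to Theorem~\ref{t1},'' and your outline is a faithful adaptation of that proof to the Waterman weights (correctly observing that the linear weighted sums need only the triangle inequality for $d$ rather than Minkowski, and that the pointwise Cauchy estimate picks up a harmless constant $\max_{i,j\le 2}\lambda_{i,j}^{-1}$ from the special partitions). No gaps.
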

Similar to Theorem \ref{t1}, we have the following result.

\begin{definition}\label{d7}
A set $A\subset\Lambda{\rm BV}(I\times I,M)$ is said to be joint equivariated, if for every $\varepsilon>0$, there exist partitions $\Pi_{\varepsilon}$ and $\Pi_{\varepsilon}^{\ast}$ such that for every $f,g\in A$, we have
$$V_{\Lambda}(f,g)\leq\varepsilon+V_{\Lambda}(f,g,\Pi_{\varepsilon}\times\Pi_{\varepsilon}^{\ast}),$$ where $$V_{\Lambda}(f,g,\Pi_{\varepsilon}\times\Pi_{\varepsilon}^{\ast}):= V_{\Lambda}(f(\cdot,0),g(\cdot,0),\Pi_{\varepsilon})+V_{\Lambda}(f(0,\cdot),g(0,\cdot),\Pi_{\varepsilon}^{\ast})+V_{\Lambda,2}(f,g,\Pi_{\varepsilon}\times\Pi_{\varepsilon}^{\ast}).$$
\end{definition}
Similar to Lemma \ref{l4}, we have the following Lemma.
\begin{lemma}\label{l5}
Let $m,n\in \mathbb{N}$. For elements $\xi=(\xi_{ij})_{i=0,j=0}^{n,m}$ and $\delta=(\delta_{ij})_{i=0,j=0}^{n,m}\in M^{(n+1)(m+1)}$, let
\begin{align*}
\rho'_{\Lambda}(&\xi,\delta)=\sup_{\Pi\times\Pi^{\ast}}\bigg( d(\xi_{00},\delta_{00})+\sum_{i=1}^{n}\lambda_{i1}(d(\xi_{i\,0}+\delta_{i-1\,0},\xi_{i-1\,0}+\delta_{i\,0}))\\
&~~~~+\sum_{j=1}^{m}\lambda_{1j}(d(\xi_{0\,j}+\delta_{0\,j-1},\xi_{0\,j-1}+\delta_{0\,j}))\\
&+\sum_{j=1}^{m}\sum_{i=1}^{n}\lambda_{ij}(d(\xi_{i\,j}+\xi_{i-1\,j-1}+\delta_{i-1\,j}+\delta_{i\,j-1},\xi_{i-1\,j}+\xi_{i\,j-1}+\delta_{i\,j}+\delta_{i-1\,j-1}))\bigg).
\end{align*}
Then $(M^{(n+1)(m+1)},\rho'_{\Lambda})$ is complete. Moreover, let $B_{ij}$ be precompact in $M$ for $i=0,\dots,n,j=0,\dots,m$. Then $B=\prod_{i=0,j=0}^{n,m}B_{ij}$ be precompact in $(M^{(n+1)(m+1)},\rho'_{\Lambda})$.
\end{lemma}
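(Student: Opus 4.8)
The plan is to follow the proof of Lemma \ref{l4} almost verbatim, with the simplification that the Waterman functional is assembled from the fixed positive weights $\lambda_{ij}$ multiplied linearly against the distances $d(\cdot,\cdot)$ rather than from $p$-th powers; consequently every appeal to Minkowski's inequality in Lemma \ref{l4} is replaced by the ordinary subadditivity $d(a,c)\le d(a,b)+d(b,c)$ together with the positivity of the $\lambda_{ij}$, which makes the triangle inequality even more direct. Note also that, unlike the Riesz functional $\rho'_R$, the expression under $\sup_{\Pi\times\Pi^{\ast}}$ carries no factors depending on the mesh points $t_i,s_j$, so the supremum plays only a formal role and one may argue with the single weighted sum.

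First I would verify the metric axioms. Nonnegativity is immediate from $d\ge 0$ and $\lambda_{ij}>0$, and symmetry follows from the symmetry of $d$ together with the commutativity of $+$, since swapping $\xi$ and $\delta$ merely interchanges the two arguments of each distance. For definiteness, suppose $\rho'_{\Lambda}(\xi,\delta)=0$; then the term $d(\xi_{00},\delta_{00})$ forces $\xi_{00}=\delta_{00}$, after which the $i=1$ summand $\lambda_{11}\,d(\xi_{10}+\delta_{00},\xi_{00}+\delta_{10})=0$ combined with translation invariance yields $\xi_{10}=\delta_{10}$, and symmetrically $\xi_{01}=\delta_{01}$. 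Propagating along the bottom edge, then the left edge, and finally into the interior --- where the double-sum term for $(i,j)$ involves only $\xi_{ij},\xi_{i-1\,j-1},\xi_{i-1\,j},\xi_{i\,j-1}$, three of whose values are already known --- an induction on $(i,j)$ gives $\xi=\delta$. For the triangle inequality I would insert an auxiliary $h=(h_{ij})$ and, exactly as in Lemma \ref{l4}, add equal $h$-terms to both arguments of each distance so that the extra distance in (\ref{1}) vanishes, regroup by commutativity, and split each distance by (\ref{2}); multiplying through by $\lambda_{ij}$ and summing then gives $\rho'_{\Lambda}(\xi,\delta)\le\rho'_{\Lambda}(\xi,h)+\rho'_{\Lambda}(h,\delta)$.

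Next I would prove completeness. Given a Cauchy sequence $\{\xi^{q}\}$, the inequality $d(\xi_{00}^{q},\xi_{00}^{q+w})\le\rho'_{\Lambda}(\xi^{q},\xi^{q+w})$ shows $\{\xi_{00}^{q}\}$ is Cauchy in $M$, and the same inductive propagation used for definiteness --- now applied to differences, using (\ref{1}) to peel off the already controlled neighbouring coordinates --- shows that every $\{\xi_{ij}^{q}\}$ is Cauchy. By completeness of $M$ each coordinate converges to some $\xi_{ij}$, and letting $w\to\infty$ in $\rho'_{\Lambda}(\xi^{q},\xi^{q+w})\le\varepsilon$ gives $\xi^{q}\to\xi$, so $(M^{(n+1)(m+1)},\rho'_{\Lambda})$ is complete. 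Finally, for the precompactness of $B=\prod_{i=0,j=0}^{n,m}B_{ij}$, I would take any sequence in $B$ and extract nested subsequences coordinate by coordinate: precompactness of $B_{00}$ gives a subsequence with convergent $(0,0)$-coordinate, precompactness of $B_{10}$ a further subsequence with convergent $(1,0)$-coordinate, and so on; since there are only finitely many coordinates, the final diagonal subsequence converges in every coordinate and hence in $\rho'_{\Lambda}$, whence $B$ is sequentially, and therefore totally, precompact in the complete space $(M^{(n+1)(m+1)},\rho'_{\Lambda})$.

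The step I expect to require the most care is the two-dimensional induction that propagates equality, and later convergence, from the corner $(0,0)$ outward: one must order the coordinates so that when $\xi_{ij}$ is treated its three partners $\xi_{i-1\,j},\xi_{i\,j-1},\xi_{i-1\,j-1}$ in the mixed difference are already resolved, and one must repeatedly invoke translation invariance and (\ref{1})--(\ref{2}) to convert control of the combined quantity $\xi_{ij}+\xi_{i-1\,j-1}+\cdots$ into control of $\xi_{ij}$ alone.
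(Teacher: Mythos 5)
Your proposal is correct and follows exactly the route the paper intends: the paper gives no separate proof of Lemma~\ref{l5} but states it is ``similar to Lemma~\ref{l4}'', and your adaptation --- replacing each use of Minkowski's inequality by the plain triangle inequality for $d$ weighted by the positive $\lambda_{ij}$, with the same corner-to-interior induction for definiteness and coordinatewise extraction for completeness and precompactness --- is precisely that adaptation. No gaps.
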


By Lemmas \ref{l1} and \ref{l5}, and the same argument in the proof of Theorem \ref{t2} we have the following Theorem.
\begin{theorem}\label{t4}
A set~$A\subset\Lambda{\rm BV}(I\times I,M)$ is precompact, if the following conditions are satisfied:

 {\rm(i)} the set $A$ is joint equivariated;

 {\rm(ii)} for every $t,s\in I$, the set $\{f(t,s):~f\in A\}$ is precompact in $M$.
\end{theorem}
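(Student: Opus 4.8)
The plan is to follow the blueprint already established for Theorem \ref{t2}, since Theorem \ref{t4} is stated to hold ``by the same argument.'' The three ingredients are: Lemma \ref{l1} (equimetric plus totally bounded for each pseudo-metric gives totally bounded for the supremum), Lemma \ref{l5} (the product of precompact coordinate sets is precompact in the finite-dimensional model space $M^{(n+1)(m+1)}$ under $\rho'_{\Lambda}$), and the observation that the space $(\Lambda{\rm BV}(I\times I,M),\rho_{\Lambda})$ is complete by Theorem \ref{t3}. In a complete metric space precompactness is equivalent to total boundedness, so it suffices to show $A$ is totally bounded for $\rho_{\Lambda}$.

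First I would record that $\rho_{\Lambda}=\sup_{\Pi\times\Pi^{\ast}}\rho^{\Lambda}_{\Pi\times\Pi^{\ast}}$ is the supremum of the family of pseudo-metrics $\{\rho^{\Lambda}_{\Pi\times\Pi^{\ast}}\}$ indexed by pairs of partitions, and that each $\rho^{\Lambda}_{\Pi\times\Pi^{\ast}}$ is genuinely a pseudo-metric on $\Lambda{\rm BV}(I\times I,M)$ (the triangle inequality for these follows from the same computation used in the proof of Theorem \ref{t3}). Next I would translate condition (i): by Definition \ref{d7}, joint equivariation says exactly that for every $\varepsilon>0$ there is a pair $\Pi_{\varepsilon}\times\Pi_{\varepsilon}^{\ast}$ with $V_{\Lambda}(f,g)\le\varepsilon+V_{\Lambda}(f,g,\Pi_{\varepsilon}\times\Pi_{\varepsilon}^{\ast})$ for all $f,g\in A$; adding the common term $d(f(0,0),g(0,0))$ to both sides converts this into $\rho_{\Lambda}(f,g)\le\varepsilon+\rho^{\Lambda}_{\Pi_{\varepsilon}\times\Pi_{\varepsilon}^{\ast}}(f,g)$, which is precisely the statement that $A$ is $\{\rho^{\Lambda}_{\Pi\times\Pi^{\ast}}\}$-equimetric.

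The remaining task is total boundedness of $A$ for each fixed pseudo-metric $\rho^{\Lambda}_{\Pi\times\Pi^{\ast}}$. Here I would fix partitions $\Pi:0=t_0<\cdots<t_n=1$ and $\Pi^{\ast}:0=s_0<\cdots<s_m=1$, set $B_{ij}=\{f(t_i,s_j):f\in A\}$ which is precompact in $M$ by condition (ii), and form $B=\prod_{i,j}B_{ij}$. By Lemma \ref{l5}, $B$ is precompact, hence totally bounded, in $(M^{(n+1)(m+1)},\rho'_{\Lambda})$. Define the evaluation map $Tf=\{f(t_i,s_j)\}_{i=0,j=0}^{n,m}$. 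Comparing Definition \ref{d6} with the formula for $\rho'_{\Lambda}$ in Lemma \ref{l5}, one sees that $T$ carries $\rho^{\Lambda}_{\Pi\times\Pi^{\ast}}$ exactly onto $\rho'_{\Lambda}$ restricted to the single partition pair, so $T$ is an isometry from $(A,\rho^{\Lambda}_{\Pi\times\Pi^{\ast}})$ into $B$. Total boundedness transfers under an isometric embedding into a totally bounded set, so $A$ is totally bounded for $\rho^{\Lambda}_{\Pi\times\Pi^{\ast}}$. Applying Lemma \ref{l1} with $T$ the index set of partition pairs then yields that $A$ is totally bounded for $\rho_{\Lambda}$, and completeness upgrades this to precompactness.

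I expect the only genuinely delicate point to be the isometry claim, specifically matching the supremum structures. In the definition of $\rho^{\Lambda}_{\Pi\times\Pi^{\ast}}$ the quantity $V_{\Lambda}(f,g,\Pi\times\Pi^{\ast})$ uses the fixed outer partition pair $\Pi\times\Pi^{\ast}$ but the single pseudo-metric $\rho^{\Lambda}_{\Pi\times\Pi^{\ast}}$ is \emph{not} itself a supremum over partitions, whereas $\rho'_{\Lambda}$ in Lemma \ref{l5} carries a $\sup_{\Pi\times\Pi^{\ast}}$; I would need to note that for the finitely many coordinates $\{\xi_{ij}\}$ at the fixed grid the supremum in $\rho'_{\Lambda}$ is attained (or irrelevant) at that same grid, so the two expressions agree and $T$ is a true isometry, not merely a contraction. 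Once this bookkeeping is confirmed, the rest is the routine combination of Lemmas \ref{l1} and \ref{l5} exactly as in Theorem \ref{t2}.
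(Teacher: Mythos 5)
Your proposal is correct and follows exactly the route the paper intends: the paper proves Theorem \ref{t4} by citing Lemmas \ref{l1} and \ref{l5} together with the argument of Theorem \ref{t2} (equimetricity from joint equivariation, total boundedness in each $\rho^{\Lambda}_{\Pi\times\Pi^{\ast}}$ via the evaluation isometry into the product of the precompact sets $B_{ij}$, then Lemma \ref{l1} and completeness). Your closing remark about reconciling the spurious $\sup_{\Pi\times\Pi^{\ast}}$ in the formula for $\rho'_{\Lambda}$ with the fixed-partition pseudo-metric is a fair piece of bookkeeping that the paper glosses over, but it does not change the argument.
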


\section{Bivariate bounded Korenblum variation spaces}
In 1975, Boris Korenblum considered $\kappa$ variation in \cite{21}. In this section, we consider its analogue for metric semigroup valued functions with two variables.
\begin{definition}\label{d11}
A function $\kappa:[0,1]\to [0,1]$ is called a distortion function if $\kappa$ is increasing, concave, and satisfies $\kappa(0)=0$, $\kappa(1)=1$, and $\lim\limits_{t\to 0^{+}}\frac{\kappa(t)}{t}=\infty$.
\end{definition}

Obviously, a distortion function $\kappa$ is always subadditive in the sense that $\kappa(s+t)\leq\kappa(s)+\kappa(t)$ for every $0\leq s,t\leq 1$.

Let $p\in(1,\infty )$. For a function $f:I\times I\rightarrow M$, a pair of partitions $\Pi$ and $\Pi^{\ast}$ of the unit interval $I$, we denote
\[
V_{\kappa}(f(\cdot,0),\Pi):= \bigg(\sum_{i=1}^{n}\frac{d(f(t_{i},0),f(t_{i-1},0))}{\kappa(t_{i}-t_{i-1})}\bigg)^{\frac{1}{p}},
\]
\[
V_{\kappa}(f(0,\cdot),\Pi^{\ast}):=\bigg(\sum_{j=1}^{m}\frac{d(f(0,s_{j}),f(0,s_{j-1}))}{\kappa(s_{j}-s_{j-1})}\bigg)^{\frac{1}{p}}
\]
and
\[
V_{\kappa,2}(f,\Pi\times\Pi^{\ast}):=\bigg(\sum_{j=1}^{m}\sum_{i=1}^{n}\frac{d(f(t_{i},s_{j})+f(t_{i-1},s_{j-1}),f(t_{i},s_{j-1})+f(t_{i-1},s_{j}))}{\kappa(t_{i}-t_{i-1})\kappa(s_{j}-s_{j-1})}\bigg)^{\frac{1}{p}}.
\]
Moreover, $$V_{\kappa}(f(\cdot,0)):= \sup V_{\kappa}(f(\cdot,0),\Pi),$$ $$V_{\kappa}(f(0,\cdot)):= \sup V_{\kappa}(f(0,\cdot),\Pi^{\ast})$$ and $$V_{\kappa,2}(f):= \sup V_{\kappa,2}(f,\Pi\times\Pi^{\ast}),$$ where all suprema are taken over the indicated partitions.

\begin{definition}\label{d12}
Let a distortion function $\kappa:[0,1]\to [0,1]$. For a function $f:~I\times I\to M$, if $$V_{\kappa}(f):=  V_{\kappa}(f(\cdot,0))+V_{\kappa}(f(0,\cdot))+V_{\kappa,2}(f)<+\infty,$$ then we say that $f$ has bounded Korenblum variation on $I\times I$ and denote $\kappa{\rm BV}(I\times I,M)=\{f:I\times I\rightarrow M,V_{\kappa}(f)<\infty\}$.
\end{definition}

Taking a pair of partitions $\Pi$ and $\Pi^{\ast}$ of the unit interval $I$, for every $f,g\in\kappa{\rm BV}(I \times I,M)$, we denote
$$V_{\kappa}(f(\cdot,0),g(\cdot,0),\Pi):= \bigg(\sum_{i=1}^{n}\frac{d(f(t_{i},0)+g(t_{i-1},0),f(t_{i-1},0)+g(t_{i},0))}{\kappa(t_{i}-t_{i-1})}\bigg)^{\frac{1}{p}},$$
$$V_{\kappa}(f(0,\cdot),g(0,\cdot),\Pi^{\ast}):=\bigg(\sum_{j=1}^{m}\frac{d(f(0,s_{j})+g(0,s_{j-1}),f(0,s_{j-1})+g(0,s_{j}))}{\kappa(s_{j}-s_{j-1})}\bigg)^{\frac{1}{p}},$$ and
\begin{align*}
V_{\kappa,2}(&f,g,\Pi\times\Pi^{\ast}):=\bigg(\sum_{j=1}^{m}\sum_{i=1}^{n}\frac{1}{\kappa(t_{i}-t_{i-1})\kappa(s_{j}-s_{j-1})}d(f(t_{i},s_{j})+f(t_{i-1},s_{j-1})\\
&+g(t_{i},s_{j-1})+g(t_{i-1},s_{j}),g(t_{i},s_{j})+g(t_{i-1},s_{j-1})+f(t_{i},s_{j-1})+f(t_{i-1},s_{j}))\bigg)^{\frac{1}{p}}.
\end{align*}

\begin{definition}\label{d13}
 Let a distortion function $\kappa:[0,1]\to [0,1]$. For every $f,g\in\kappa{\rm BV}(I \times I,M)$ and each pair of partitions $\Pi$ and $\Pi^{\ast}$, we denote
\begin{align*}
V_{\Pi\times\Pi^{\ast}}^{\kappa}(f,g)=V_{\kappa}(f(\cdot,0),g(\cdot,0),\Pi)+V_{\kappa}(f(0,\cdot),g(0,\cdot),\Pi^{\ast})+V_{\kappa,2}(f,g,\Pi\times\Pi^{\ast}).
\end{align*}
Then the joint variation of two functions $f$, $g$ is defined by $$V_{\kappa}(f,g):= \sup_{\Pi\times\Pi^\ast} V^{\kappa}_{\Pi\times\Pi^\ast}(f,g)$$
 where the supremum is taken over the indicated partitions, and
\begin{align*}\rho_{\kappa}(f,g):= \sup_{\Pi\times\Pi^\ast}\rho^{\kappa}_{\Pi\times\Pi^\ast}(f,g)&=:\sup_{\Pi\times\Pi^\ast} (d(f(0,0),g(0,0))+V^{\kappa}_{\Pi\times\Pi^\ast}(f,g))\\
&= d(f(0,0),g(0,0))+V_{\kappa}(f,g) .\end{align*}
\end{definition}

Let $f,g\in \kappa{\rm BV}(I\times I,M)$. By (\ref{2}) and Mikowski's inequality, we have $V_{\kappa}(f,g)\leq V_{\kappa}(f)+V_{\kappa}(g)$. Thus $\rho_{\kappa}(f,g)<\infty$, so the Definition \ref{d13} is reasonable.

Similar to Theorem \ref{t1}, we have the following theorem.

\begin{theorem}\label{t7}
Let $\kappa$ be a distortion function on $[0,1]$. Then $(\kappa{\rm BV}(I\times I,M),\rho_{\kappa})$ is a complete metric space.
\end{theorem}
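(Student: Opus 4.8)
The plan is to follow verbatim the three-stage template used for Theorem \ref{t1} and Theorem \ref{t5}: first verify that $\rho_{\kappa}$ is a genuine metric, then produce from a Cauchy sequence a pointwise limit $f$ and show $f\in\kappa{\rm BV}(I\times I,M)$, and finally show that the convergence also takes place in the $\rho_{\kappa}$-metric. The single structural novelty relative to the Riesz case is that every block of $V_{\kappa}$ has the shape $\big(\sum \tfrac{d(\cdot,\cdot)}{\kappa(\cdot)}\big)^{1/p}$: here the metric $d$ enters to the \emph{first} power and the exponent $1/p$ sits \emph{outside} the entire sum. Consequently the triangle-type estimates cannot come from Minkowski's inequality (as in Theorems \ref{t1} and \ref{t5}) but must instead be obtained by summing the scalar triangle inequalities for $d$ and then invoking the elementary subadditivity $(a+b)^{1/p}\le a^{1/p}+b^{1/p}$, valid since $p>1$. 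I regard keeping this first-power-$d$/outer-$1/p$ bookkeeping straight as the only point that needs real attention; everything else transcribes directly.

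For the metric axioms, nonnegativity is clear. For definiteness, if $\rho_{\kappa}(f,g)=0$ then $d(f(0,0),g(0,0))=0$ and $V_{\kappa}(f,g)=0$; inserting the special partitions $\{0,t,1\}$ and $\{0,s,1\}$ into $V_{\kappa}(f,g)$ and using that $\kappa(t),\kappa(1-t),\kappa(s),\kappa(1-s)$ are strictly positive for $t,s\in(0,1)$ (recall $\kappa(0)=0$ and $\kappa$ increasing), a vanishing sum of nonnegative terms forces each individual distance to vanish. Together with translation invariance this yields $f(t,0)=g(t,0)$, $f(0,s)=g(0,s)$ and then $f(t,s)=g(t,s)$, exactly as in Theorem \ref{t5}. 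For the triangle inequality I would fix a pair of partitions, add the terms $h(t_i,s_j)+\cdots$ to both arguments of each $d(\cdot,\cdot)$ (legitimate by translation invariance), bound the result by (\ref{2}), divide by the corresponding $\kappa$-weights, sum over the partition, and finally apply $(a+b)^{1/p}\le a^{1/p}+b^{1/p}$ to split each of the three blocks; taking the supremum over $\Pi\times\Pi^{\ast}$ then gives $\rho_{\kappa}(f,g)\le\rho_{\kappa}(f,h)+\rho_{\kappa}(h,g)$.

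For completeness, let $\{f_u\}$ be $\rho_{\kappa}$-Cauchy. Then $\{f_u(0,0)\}$ is Cauchy in $M$ and converges to some $f(0,0)$. For fixed $(t,s)$ I would extract a single summand from $V_{\kappa}(f_u,f_v)$ via the partitions $\{0,t,1\}\times\{0,s,1\}$; since the relevant $\kappa$-weights are fixed positive constants for fixed $(t,s)$, this controls $d(f_u(t,s),f_v(t,s))$ and shows $\{f_u(t,s)\}$ is Cauchy, defining the pointwise limit $f$. The analogue of Lemma \ref{l3}/Lemma \ref{l6} (which holds by the same computation, using (\ref{1}) and subadditivity of $t\mapsto t^{1/p}$ in place of Minkowski) gives $|V_{\kappa}(f_u)-V_{\kappa}(f_v)|\le V_{\kappa}(f_u,f_v)$, so $\{V_{\kappa}(f_u)\}$ is bounded, say by $C$. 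For each fixed partition the corresponding finite sum is a continuous function of the finitely many values $f_u(t_i,s_j)$, so letting $u\to\infty$ and using continuity of $d$ gives the same bound $C$ for $f$; taking the supremum over partitions shows $f\in\kappa{\rm BV}(I\times I,M)$.

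Finally, for convergence in $\rho_{\kappa}$, I would start from $V_{\kappa}(f_u,f_v,\Pi\times\Pi^{\ast})\le V_{\kappa}(f_u,f_v)\le\varepsilon$ for $u,v>N$, let $v\to\infty$ in each fixed finite sum (continuity of $d$ again), take the supremum over $\Pi\times\Pi^{\ast}$ to obtain $V_{\kappa}(f_u,f)\le\varepsilon$, and combine with $d(f_u(0,0),f(0,0))\le\varepsilon$ to conclude $\rho_{\kappa}(f_u,f)\le 2\varepsilon$ for $u>N$. The main obstacle, as flagged, is purely that the role Minkowski played for the $W$ and Riesz norms is now played by subadditivity of $t\mapsto t^{1/p}$; the semigroup inequalities (\ref{1}) and (\ref{2}) together with the strict positivity of $\kappa$ on $(0,1]$ and its subadditivity are precisely what make every step carry over.
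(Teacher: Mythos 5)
Your proposal is correct and follows essentially the route the paper intends: the paper gives no written proof of Theorem \ref{t7} beyond the remark that it is ``similar to Theorem \ref{t1},'' and your outline is exactly that adaptation. You also correctly identify the one place where the transcription is not verbatim --- since $d$ enters to the first power with the exponent $1/p$ outside the sum, Minkowski's inequality must be replaced by the subadditivity $(a+b)^{1/p}\le a^{1/p}+b^{1/p}$ for $p>1$, together with the strict positivity of $\kappa$ on $(0,1]$ --- which is precisely the detail the paper leaves implicit.
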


Similar to Lemma \ref{l4}, we have the following lemma.

\begin{lemma}\label{l8}
Let $\kappa$ be a distortion function and $m,n\in \mathbb{N}$. For elements $\xi=(\xi_{ij})_{i=0,j=0}^{n,m}$ and $\delta=(\delta_{ij})_{i=0,j=0}^{n,m}\in M^{(n+1)(m+1)}$, let
\begin{align*}
\rho'_{\kappa}&(\xi,\delta)=\sup_{\Pi\times\Pi^\ast}\bigg( d(\xi_{00},\delta_{00})+\bigg(\sum_{i=1}^{n}\frac{d(\xi_{i\,0}+\delta_{i-1\,0},\xi_{i-1\,0}+\delta_{i\,0})}{\kappa(t_{i}-t_{i-1})}\bigg)^{\frac{1}{p}}\\
&+\bigg(\sum_{j=1}^{m}\frac{d(\xi_{0\,j}+\delta_{0\,j-1},\xi_{0\,j-1}+\delta_{0\,j})}{\kappa(s_{j}-s_{j-1})}\bigg)^{\frac{1}{p}}\\
&+\bigg(\sum_{j=1}^{m}\sum_{i=1}^{n}\frac{d(\xi_{i\,j}+\xi_{i-1\,j-1}+\delta_{i-1\,j}+\delta_{i\,j-1},\xi_{i-1\,j}+\xi_{i\,j-1}+\delta_{i\,j}+\delta_{i-1\,j-1})}{\kappa(t_{i}-t_{i-1})\kappa(s_{j}-s_{j-1})}\bigg)^{\frac{1}{p}}\bigg).
\end{align*}
 Then $(M^{(n+1)(m+1)},\rho'_{\kappa})$ is complete. Moreover, let $B_{ij}$ be precompact in $M$ for $i=0,\dots,n,j=0,\dots,m$. Then $B=\prod_{i=0,j=0}^{n,m}B_{ij}$ is precompact in $(M^{(n+1)(m+1)},\rho'_{\kappa})$.
\end{lemma}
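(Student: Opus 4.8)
The plan is to follow the three-part template of the proof of Lemma~\ref{l4}, adapting it to the Korenblum weights $1/\kappa(t_i-t_{i-1})$ and to the fact that each coordinate block of $\rho'_\kappa$ is a single $1/p$-th power of a weighted sum of first powers of $d$ (rather than an $\ell^p$-type expression). Throughout, the two structural facts I will use are translation invariance of $d$ together with inequalities (\ref{1}) and (\ref{2}), and --- in place of Minkowski's inequality --- the subadditivity $(X+Y)^{1/p}\le X^{1/p}+Y^{1/p}$, valid for $X,Y\ge 0$ since $p>1$ forces $1/p\in(0,1)$.

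First I would verify that $\rho'_\kappa$ is a metric. Non-negativity and symmetry are immediate from those of $d$. For the identity of indiscernibles, suppose $\rho'_\kappa(\xi,\delta)=0$. The term $d(\xi_{00},\delta_{00})$ forces $\xi_{00}=\delta_{00}$; evaluating the remaining sums on partitions for which each weight $\kappa(t_i-t_{i-1})$, $\kappa(s_j-s_{j-1})$ is a fixed positive number, every summand must vanish, so $d(\xi_{10}+\delta_{00},\xi_{00}+\delta_{10})=0$ gives $\xi_{10}=\delta_{10}$, and likewise $\xi_{01}=\delta_{01}$; an induction on $(i,j)$ using translation invariance then yields $\xi_{ij}=\delta_{ij}$ for all $i,j$, i.e. $\xi=\delta$. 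For the triangle inequality I would insert an intermediate tuple $h$: after adding the appropriate $h$-entries inside each $d(\cdot,\cdot)$ via translation invariance and splitting by (\ref{2}), each summand $d(\cdots)$ is bounded by a ``$\xi$-to-$h$'' distance plus an ``$h$-to-$\delta$'' distance; dividing by the (fixed, for a given partition) weight, summing, and applying the subadditivity of $t\mapsto t^{1/p}$ block by block gives the triangle inequality for the fixed partition. Taking the supremum over $\Pi\times\Pi^{\ast}$ on the left then yields $\rho'_\kappa(\xi,\delta)\le\rho'_\kappa(\xi,h)+\rho'_\kappa(h,\delta)$, exactly as in Lemma~\ref{l4}.

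Next I would prove completeness. Given a Cauchy sequence $\{\xi^q\}$, the ever-present term $d(\xi^q_{00},\xi^{q+w}_{00})\le\rho'_\kappa(\xi^q,\xi^{q+w})$ shows $\{\xi^q_{00}\}$ is Cauchy in $M$; using (\ref{1}) and translation invariance to bound each coordinate distance $d(\xi^q_{ij},\xi^{q+w}_{ij})$ by the already-controlled lower-index distances together with the corresponding summand of $\rho'_\kappa$ (on a fixed partition, where the weights are fixed constants), an induction on $(i,j)$ shows every $\{\xi^q_{ij}\}$ is Cauchy. By completeness of $M$ each converges to some $\xi_{ij}$; letting $w\to\infty$ in $\rho'_\kappa(\xi^q,\xi^{q+w})\le\varepsilon$ and using continuity of $d$, $+$ and $\kappa$ gives $\rho'_\kappa(\xi^q,\xi)\le\varepsilon$, so $\xi^q\to\xi$ and $(M^{(n+1)(m+1)},\rho'_\kappa)$ is complete. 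Finally, for the precompactness of $B=\prod B_{ij}$ I would take any sequence in $B$ and extract, by successive (diagonal) selection over the finitely many coordinates, a subsequence convergent in each coordinate; since $\rho'_\kappa$ is built continuously from those finitely many coordinate distances, coordinatewise convergence yields $\rho'_\kappa$-convergence, so the subsequence converges in $B$.

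I expect the main obstacle to be the bookkeeping forced by the weights $1/\kappa(t_i-t_{i-1})$ under the supremum over partitions. Concretely, two points need care that do not arise in Lemma~\ref{l4}: the triangle inequality must rest on the subadditivity of $t\mapsto t^{1/p}$ rather than on Minkowski's inequality, and the passage from coordinatewise convergence to $\rho'_\kappa$-convergence (in both the completeness and the precompactness steps) must be justified uniformly in the partition --- which is why I restrict to a fixed partition, where the weights are fixed positive constants, before reinstating the supremum. Once these weight estimates are in place, the remainder is the same coordinate-by-coordinate argument as for Lemma~\ref{l4}.
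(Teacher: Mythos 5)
Your proposal is correct and follows essentially the same route as the paper, which gives no separate argument for this lemma but simply invokes the proof of Lemma \ref{l4} (metric axioms via coordinate induction and an inserted intermediate tuple $h$, completeness coordinatewise, precompactness of the product by successive extraction of subsequences). The two adaptations you single out --- replacing Minkowski's inequality by the subadditivity of $t\mapsto t^{1/p}$, since here each block is the $1/p$-th power of a weighted sum of \emph{first} powers of $d$, and reading the supremum over partitions as a fixed partition so that the weights $1/\kappa(t_{i}-t_{i-1})$ are fixed positive constants --- are precisely the points where the transcription from Lemma \ref{l4} is not mechanical, and you handle both correctly.
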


\begin{definition}\label{d14}
Let $\kappa$ be a distortion function on $[0,1]$. A set $A\subset\kappa{\rm BV}(I\times I,M)$ is said to be joint equivariated, if for every $\varepsilon>0$ exist a pair of partitions $\Pi_{\varepsilon}$ and $\Pi_{\varepsilon}^{\ast}$ such that for every~$f,g\in A$,
$$V_{\kappa}(f,g)\leq\varepsilon+V_{\kappa}(f,g,\Pi_{\varepsilon}\times\Pi_{\varepsilon}^{\ast}),$$ where $$V_{\kappa}(f,g,\Pi_{\varepsilon}\times\Pi_{\varepsilon}^{\ast}):= V_{\kappa}(f(\cdot,0),g(\cdot,0),\Pi_{\varepsilon})+V_{\kappa}(f(0,\cdot),g(0,\cdot),\Pi_{\varepsilon}^{\ast})+V_{\kappa,2}(f,g,\Pi_{\varepsilon}\times\Pi_{\varepsilon}^{\ast}).$$
\end{definition}

By Lemmas \ref{l1} and \ref{l8}, and the same argument in the proof of Theorem \ref{t2} we have the following Theorem.

\begin{theorem}\label{t8}
Let $\kappa$ be a distortion function. A set $A\subset\kappa{\rm BV}(I\times I,M)$ is precompact if the following conditions are satisfied:

{\rm(i)} $A$ is joint equivariated;

{\rm(ii)} for every $t,s\in I$, the set $\{f(t,s):~f\in A\}$ is precompact.
\end{theorem}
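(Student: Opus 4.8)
The plan is to follow verbatim the scheme of the proof of Theorem~\ref{t2}, now feeding in Lemma~\ref{l8} in place of Lemma~\ref{l4} and invoking Theorem~\ref{t7} for the completeness that converts total boundedness into precompactness. First I would record that the family $\{\rho^{\kappa}_{\Pi\times\Pi^{\ast}}:\Pi,\Pi^{\ast}\text{ partitions of }I\}$ consists of pseudo-metrics on $\kappa{\rm BV}(I\times I,M)$ whose supremum is exactly $\rho_{\kappa}$, by Definition~\ref{d13}. Condition~(i), namely that $A$ is joint equivariated in the sense of Definition~\ref{d14}, is precisely the statement that $A$ is $\{\rho^{\kappa}_{\Pi\times\Pi^{\ast}}\}$-equimetric: given $\varepsilon>0$, the partitions $\Pi_{\varepsilon},\Pi_{\varepsilon}^{\ast}$ furnished by joint equivariation satisfy $V_{\kappa}(f,g)\le\varepsilon+V_{\kappa}(f,g,\Pi_{\varepsilon}\times\Pi_{\varepsilon}^{\ast})$ for all $f,g\in A$, and adding $d(f(0,0),g(0,0))$ to both sides yields $\rho_{\kappa}(f,g)\le\varepsilon+\rho^{\kappa}_{\Pi_{\varepsilon}\times\Pi_{\varepsilon}^{\ast}}(f,g)$, the defining inequality of an equimetric set.

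Next I would fix an arbitrary pair of partitions $\Pi:0=t_0<\cdots<t_n=1$ and $\Pi^{\ast}:0=s_0<\cdots<s_m=1$ and show that $A$ is totally bounded for the single pseudo-metric $\rho^{\kappa}_{\Pi\times\Pi^{\ast}}$. Set $B_{ij}=\{f(t_i,s_j):f\in A\}$; by condition~(ii) each $B_{ij}$ is precompact in $M$, so Lemma~\ref{l8} guarantees that the product $B=\prod_{i=0,j=0}^{n,m}B_{ij}$ is precompact, hence totally bounded, in $(M^{(n+1)(m+1)},\rho'_{\kappa})$. The key observation is that $\rho^{\kappa}_{\Pi\times\Pi^{\ast}}(f,g)$ depends on $f$ and $g$ only through their values on the grid $\{t_i\}\times\{s_j\}$, so the evaluation map $Tf=\{f(t_i,s_j)\}_{i=0,j=0}^{n,m}$ sends $(A,\rho^{\kappa}_{\Pi\times\Pi^{\ast}})$ isometrically into $(B,\rho'_{\kappa})$, the weights $\kappa(t_i-t_{i-1})$ and $\kappa(s_j-s_{j-1})$ matching termwise in the two expressions. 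Total boundedness of $B$ then pulls back along the isometry $T$ to total boundedness of $A$ in $\rho^{\kappa}_{\Pi\times\Pi^{\ast}}$.

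Having shown that $A$ is $\{\rho^{\kappa}_{\Pi\times\Pi^{\ast}}\}$-equimetric and totally bounded for each individual $\rho^{\kappa}_{\Pi\times\Pi^{\ast}}$, I would apply Lemma~\ref{l1} to conclude that $A$ is totally bounded for the supremum pseudo-metric $\rho_{\kappa}$. Since $(\kappa{\rm BV}(I\times I,M),\rho_{\kappa})$ is a complete metric space by Theorem~\ref{t7}, and in a complete metric space total boundedness is equivalent to precompactness, it follows that $A$ is precompact, which finishes the proof.

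I expect the only genuinely delicate point to be the isometry claim in the second paragraph: one must check that for a fixed pair of partitions the grid evaluation identifies $\rho^{\kappa}_{\Pi\times\Pi^{\ast}}$ with $\rho'_{\kappa}$ on the finite product, in particular that the denominators built from $\kappa$ transfer correctly, and that the subadditivity and limit conditions on the distortion function $\kappa$ (already used inside Lemma~\ref{l8} to establish that $\rho'_{\kappa}$ is a complete metric and that products of precompact sets stay precompact) cause no difficulty here. Everything else is the formal template of Theorem~\ref{t2}.
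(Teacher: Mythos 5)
Your proposal is correct and follows exactly the route the paper takes: the paper proves Theorem~\ref{t8} by citing Lemmas~\ref{l1} and~\ref{l8} and repeating the argument of Theorem~\ref{t2} verbatim (equimetricity from joint equivariation, grid evaluation as an isometry into the precompact product, then Lemma~\ref{l1} and completeness from Theorem~\ref{t7}). Your added remark about checking that the $\kappa$-weighted denominators match termwise under the evaluation map is a reasonable point of care, but it introduces nothing beyond what the paper's template already covers.
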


\vspace{0.5cm}

\noindent{\bf Acknowledgements} The work is supported by the National Natural Science Foundation of China (Grant No. 12161022).

\noindent{\bf  Data availability}  Our manuscript has no associated data.

\section*{Declarations}

 {\bf Conflict of interest} The authors declare that there is no conflict of interest.


\end{document}